\newcommand\R{\mathbb{R}}
\newcommand\C{\mathbb{C}}
\newcommand\N{\mathcal{N}}
\newcommand\A{\mathcal{A}}
\newcommand\I{\mathcal{I}}
\newcommand\F{\mathcal{F}}
\def\bOmega{\overline \Omega}
\renewcommand\L{\mathcal{L}}
\renewcommand\Re{{\rm Re \,}}
\def\u{\bar{u}}
\def\v{\bar{v}}
\def\tv{\widetilde{v}}
\def\tu{\widetilde{u}}
\def\ve{\varepsilon}
\newtheorem{theorem}{Theorem}
\newtheorem{cor}[theorem]{Corollary}
\newtheorem{lemma}[theorem]{Lemma}
\newtheorem{prop}[theorem]{Proposition}
\numberwithin{equation}{section}
\numberwithin{theorem}{section}
\numberwithin{figure}{section}
\theoremstyle{remark}
\newtheorem{rem}[theorem]{Remark}
\theoremstyle{remark}
\theoremstyle{remark}
\newtheorem{assumption}[theorem]{Assumption}
\begin{document}

\title[Instability of Turing patterns]{Instability of Turing patterns   in \\ reaction-diffusion-ODE systems}

\author[A. Marciniak-Czochra]{Anna Marciniak-Czochra}
\address[A. Marciniak-Czochra]{
Institute of Applied Mathematics,  Interdisciplinary Center for Scientific Computing (IWR) and BIOQUANT, University of Heidelberg, 69120 Heidelberg, Germany}
\email{anna.marciniak@iwr.uni-heidelberg.de}
\urladdr {http://www.biostruct.uni-hd.de/}

\author[G. Karch]{Grzegorz Karch}
\address[G. Karch]{
 Instytut Matematyczny, Uniwersytet Wroc\l awski,
 pl. Grunwaldzki 2/4, 50-384 Wroc\-\l aw, Poland}
\email{grzegorz.karch@math.uni.wroc.pl}
\urladdr {http://www.math.uni.wroc.pl/~karch}

\author[K. Suzuki]{Kanako Suzuki}
\address[K. Suzuki]{
College of Science, Ibaraki University,
2-1-1 Bunkyo, Mito 310-8512, Japan}
\email{kanako.suzuki.sci2@vc.ibaraki.ac.jp}

\date{\today}


\begin{abstract}
The aim of this paper is to contribute to the understanding of the pattern formation phenomenon in reaction-diffusion equations coupled with ordinary differential equations.  Such systems of equations arise, for example, from modeling of  interactions between cellular processes such as cell growth, differentiation or transformation and diffusing signaling factors. We focus on stability analysis of solutions of a prototype model consisting of a single reaction-diffusion equation coupled to an ordinary differential equation. We show that such systems are very different from classical reaction-diffusion models. They exhibit diffusion-driven instability (Turing instability)
 under a condition of autocatalysis of non-diffusing component. However, the same mechanism which destabilizes constant solutions of such models, destabilizes also all continuous spatially heterogeneous stationary solutions, and
consequently,  there exist no stable Turing patterns in such reaction-diffusion-ODE systems. 
We provide a rigorous result on the nonlinear instability, which involves 
the analysis of a continuous spectrum of a linear operator induced by the lack of diffusion in the destabilizing equation. 
These results are extended to discontinuous patterns for a class of nonlinearities.

\noindent{\bf Keywords:} pattern formation; reaction-diffusion equations; autocatalysis; Turing instability; unstable stationary solutions. 

\end{abstract}
\maketitle

\section{Introduction}

In this paper we focus on diffusion-driven instability (DDI) in systems of equations consisting of a single reaction-diffusion equation coupled with an ordinary differential equation system. Such systems  are important for systems biology applications; they arise for example in modeling of interactions between processes in cells and diffusing growth factors, such as in refs. \cite{Hock,Klika,MC03,MCK06, MCK08,Pham,USOO}. In some cases they can be obtained as homogenization limits of models describing coupling of cell-localized processes with cell-to-cell communication via diffusion in a cell assembly \cite{MCP,MC12}. Other examples are discussed e.g. in refs.  \cite{CTY06, E74,  MCNT13, MS09, WSW13}  and in the references therein. A detailed discussion of the DDI phenomena in the three-component systems with some diffusion coefficients equal to zero is found in the recent work \cite{Sakamoto12}.


 {\it Diffusion-driven instability}, also called the {\it Turing instability}, 
is a mechanism of {\it de novo} pattern formation, which has been often used to explain self-organization observed in nature.
DDI is a bifurcation that arises in a reaction-diffusion system, when there exists a spatially homogeneous stationary solution which is asymptotically stable  with respect to spatially homogeneous perturbations but  unstable to spatially heterogeneous perturbations.  
Models with DDI describe a process of a destabilization of  stationary spatially homogeneous 
steady states and 
evolution of the system  towards  spatially heterogeneous steady states. DDI has  inspired 
a vast number of mathematical models since the seminal paper of  Turing \cite{Turing}, providing explanations of symmetry breaking and {\it de novo} pattern formation,  shapes of animal coat markings,  and oscillating chemical reactions.
We refer the reader to the monographs by Murray \cite{MurrayI, MurrayII} and to the review
article \cite{kanako} %
for references on DDI  in the two component reaction-diffusion systems and to the paper
\cite{SMM} in the several component systems. %

%

However, in many applications there are components which are localized in space, which leads to  systems of  ordinary differential equations coupled with  reaction-diffusion equations. 
Our main goal is to clarify in what manner such models
are different from the classical Turing-type models and to demonstrate that the spatial structure of the pattern emerging via DDI cannot be determined based on linear stability analysis.

To understand the role of non-diffusive components in the pattern formation process, we focus on systems involving a single reaction-diffusion equation coupled to ODEs.  It is an interesting case, since a scalar reaction-diffusion equation cannot exhibit stable spatially heterogenous patterns \cite{CaHo} and hence in such models it is the ODE component that yields the patterning process. As shown in ref. \cite{MKS12}, it may happen that there exist no stable stationary patterns  and the emerging spatially heterogeneous structures are of a dynamical nature.
 In numerical simulations of such models,  solutions having the form of  unbounded periodic or irregular spikes have been observed \cite{MC13H}.

Thus, the aim of this  work is to investigate to which extent the results obtained in \cite{MKS12}, concerning the instability of all stationary structures, apply to a general class of reaction-diffusion-ODE models with a single diffusion operator.

 We focus on the following two-equation system 
\begin{align}
u_t  &=   f(u,v),&     \text{for}\quad &x\in\overline{\Omega}, \;\quad t>0, && \label{eq1}\\
v_t  &=    \Delta v+g(u,v)&  \text{for}\quad &x\in \Omega, \quad t>0&&\label{eq2}
\end{align}
in a bounded domain $\Omega\subset \R^N$ for $N \geq 1$, with a $C^2$-boundary $\partial\Omega$, 
supplemented with the Neumann boundary condition 
\begin{equation}\label{Neumann}
\partial_{\nu}v  =  0  \qquad \text{for}\quad x\in \partial\Omega, \quad t>0,
\end{equation}
where $\partial_\nu = \frac{\partial}{\partial \nu}$ and $\nu$ denotes the unit
outer normal vector to $\partial \Omega$, and with initial data
\begin{eqnarray}\label{ini}
&&u(x,0)  =   u_{0}(x),\qquad v(x,0)  =  v_{0}(x).
\end{eqnarray}
The nonlinearities 
 $f=f(u,v)$ and $g=g(u,v)$ are arbitrary $C^3$-functions.
Notice  that equation \eqref{eq2} may contain an arbitrary diffusion coefficient which, however,
can be rescaled and assumed to be equal to one.



In this paper we investigate  stability properties of  stationary solutions of the problem  \eqref{eq1}-\eqref{Neumann}.
Our main results are Theorems \ref{thm:non-const}
and \ref{thm:weak}
which assert that, under a natural assumption satisfied by a wide variety of systems, stationary solutions are unstable.
We call this assumption the {\it autocatalysis condition} 
(see Theorem \ref{thm:non-const})
following its physical motivation in the model. 
We show in Section \ref{sec:examp}
that this condition  is satisfied {\it for all stationary solutions} of a wide class of systems 
from mathematical biology.
 Our results are different in continuous and discontinuous stationary solutions. In the latter case,  additional assumptions on the structure of nonlinearities are required.

As a complementary result to the instability theorems, we prove
Theorem \ref{cor3} which states  that each non-constant regular stationary solution
intersecting (in a sense to be defined)  constant steady states with the DDI property, has to satisfy the 
autocatalysis condition. It is a classical idea by Turing that stable patterns appear around the
 constant steady state in systems of reaction-diffusion equations with DDI. Mathematical results on stability of such patterns can be found, {\it e.g.}, in refs.
\cite{IWW04, Wei08, WW07, WW08, WW14} and in the references therein.
In the current work, 
combining Theorems \ref{thm:non-const} and \ref{cor3},
we show that {\it this is not  the case} in
the reaction-diffusion-ODE problems \eqref{eq1}-\eqref{ini}. 
In other words, {\it the same mechanism which
destabilizes constant solutions of such models, destabilizes also 
non-constant stationary  solutions}, a behavior that does not fit the usual paradigm of the reaction-diffusion-type equations. See Remark~\ref{rem:Turing} for more details. 

Mathematically, in the proof of our main result we need to consider  a nonempty continuous spectrum of the linearized  operator.  This seems to be a novelty in the study of reaction-diffusion equations, and is caused by the absence of diffusion in one of the equations. 
In Section \ref{sec:instab}, 
we provide a rigorous proof of the nonlinear instability of steady states
by using ideas from fluid dynamics equations.

The paper is organized as follows. In Section \ref{sec:main}, we state the main results. Section \ref{sec:examp} provides relevant mathematical biology-related examples of reaction-diffusion-ODE systems. Proofs are postponed to Sections \ref{sec:instab} and \ref{sec:const:non}.   
Section \ref{sec:instab} is devoted to showing instability of the  stationary solutions under  the autocatalysis  condition.
A proof of the instability of discontinuous solutions requires additional conditions 
on the model nonlinearities. In Section \ref{sec:const:non},
the continuous  stationary solutions  are characterized and it is shown that 
the autocatalysis condition is satisfied in the class of reaction-diffusion-ODE problems \eqref{eq1}-\eqref{ini} exhibiting DDI. Appendix contains additional information on the model of early carcinogenesis which was the main motivation for our research.

\section{Results and comments}\label{sec:main}

First we  formulate  a condition which leads to instability of regular stationary solutions of the problem  \eqref{eq1}-\eqref{ini}.
Then, we show that it is the necessary condition for DDI in reaction-diffusion-ODE systems. 
Finally, we extend the instability results to a class of discontinuous stationary solutions satisfying additional assumptions.

\subsection{Instability of regular steady states}\label{sec2}

First, we focus on {\it regular stationary solutions} $(U,V)$ of problem \eqref{eq1}-\eqref{Neumann}. For this, we assume that there exists a solution  (not necessarily unique) of the equation 
$f\big(U(x),V(x)\big)=0$ that is given by the relation 
 $U(x)=k(V(x))$  for all $x\in \Omega$
with a $C^1$-function $k=k(V)$.
Then,  every  regular solution $(U,V)$ of the  boundary value problem
\begin{align}
&f(U,V)=0&     \text{for}\quad &x\in\overline{\Omega},  && \label{seq1}\\
 &\Delta V+g(U,V)=0&  \text{for}\quad &x\in \Omega, &&\label{seq2}\\
&\partial_{\nu}V  =  0&  \text{for}\quad &x\in \partial\Omega &&\label{sNeumann}
\end{align}
satisfies the elliptic problem  
\begin{align}
 &\Delta V+h(V)=0&  \text{for}\quad &x\in \Omega, &&\label{s:h}\\
& \partial_{\nu}V  =  0 & \text{for}\quad &x\in \partial\Omega,&&\label{s:h:N} 
\end{align}
where 
\begin{equation}\label{hgUk}
h(V)=g\big(k(V),V\big) \qquad \text{and}\qquad U(x)=k(V(x)).
\end{equation}

We show that all regular  stationary solutions of the problem \eqref{eq1}-\eqref{ini} are unstable under a simple assumption on the first equation.

\begin{theorem}[Instability of regular solutions]\label{thm:non-const}
Let $(U, V)$ be a  regular  solution of the problem \eqref{seq1}-\eqref{sNeumann}
satisfying
the following ``autocatalysis condition'':
\begin{equation}\label{as:auto}
 f_u \big(U(x_0), V(x_0)\big) > 0 \quad \text{for some}\quad  x_0 \in {\Omega}.
\end{equation}
Then, $(U, V)$ is an unstable solution of the initial-boundary value problem \eqref{eq1}-\eqref{ini}.
\end{theorem}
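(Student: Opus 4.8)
The plan is to prove instability by linearizing around $(U,V)$ and locating spectrum of the linearized operator in the open right half-plane. Writing $u=U+\phi$, $v=V+\psi$, the linearization of \eqref{eq1}--\eqref{eq2} is governed by the operator
\[
L\begin{pmatrix}\phi\\ \psi\end{pmatrix}=\begin{pmatrix} f_u(U,V)\,\phi+f_v(U,V)\,\psi\\ \Delta\psi+g_u(U,V)\,\phi+g_v(U,V)\,\psi\end{pmatrix},
\]
with a Neumann condition on the second component, acting on $L^2(\Omega)\times L^2(\Omega)$ (the coefficients $f_u(U(\cdot),V(\cdot))$, etc., are continuous on $\overline{\Omega}$, hence bounded multiplication operators). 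The decisive structural feature is that the first equation carries no diffusion, so the $(1,1)$ entry of $L$ is merely multiplication by the continuous function $m(x):=f_u(U(x),V(x))$. I expect this to force a band of continuous (essential) spectrum filling the range of $m$, and the autocatalysis condition \eqref{as:auto}, which gives $m(x)>0$ on the compact set $\overline{\Omega}$ and hence $\min_{\overline{\Omega}} m>0$, will place that whole band in $\{\mathrm{Re}\,\lambda>0\}$.

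To make this precise I would fix an interior point $x_0\in\Omega$ and set $\lambda_0:=m(x_0)>0$, choosing $x_0$ so that in addition $\lambda_0\notin\sigma(\Delta+g_v(U,V))$; this is possible because the latter spectrum is discrete while $m$ takes a whole interval of positive values. Let $\phi_n$ be $L^2$-normalized bumps supported in shrinking balls $B(x_0,r_n)$, so that $\phi_n\rightharpoonup 0$ weakly, and define $\psi_n:=-(\Delta+g_v(U,V)-\lambda_0)^{-1}\big(g_u(U,V)\,\phi_n\big)$, which solves the second linearized equation exactly, i.e. the second component of $(L-\lambda_0)(\phi_n,\psi_n)$ vanishes identically. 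Because the Neumann resolvent $(\Delta+g_v-\lambda_0)^{-1}$ is compact on $L^2(\Omega)$ (Rellich), it maps the weakly null sequence $g_u\phi_n$ to a strongly null one, so $\psi_n\to 0$ in $L^2$. In the first component, $(m-\lambda_0)\phi_n\to 0$ because $m$ is continuous and $\phi_n$ concentrates at $x_0$, while $f_v\psi_n\to 0$; hence $\|(L-\lambda_0)(\phi_n,\psi_n)\|\to 0$ although $\|(\phi_n,\psi_n)\|\to 1$. This singular sequence exhibits $\lambda_0$ as a point of the (essential) spectrum of $L$ with $\lambda_0>0$, so the linearization is spectrally unstable.

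Finally I would upgrade spectral instability to instability of $(U,V)$ as a solution of \eqref{eq1}--\eqref{ini}. From $\lambda_0\in\sigma(L)$ and the spectral inclusion $e^{t\lambda_0}\in\sigma(e^{tL})$ one already gets $\|e^{tL}\|\ge e^{\lambda_0 t}$, so the linear semigroup grows; more usefully, the approximate eigenfunctions above yield solutions of the linearized problem growing essentially like $e^{(\lambda_0-\varepsilon)t}$, which I would feed into a Duhamel/bootstrap argument, using that $f,g\in C^2$ make the nonlinear remainder quadratically small near $(U,V)$. I regard this last transfer as the main obstacle: unlike in the classical two-diffusion Turing setting, the operator $L$ is not analytic (there is no smoothing in the $\phi$-direction) and the unstable spectrum is essential rather than a genuine eigenvalue, so one cannot simply quote the standard linearized-instability theorem for analytic semigroups. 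The remedy is to exploit the positive spectral bound together with the block structure --- the first line being a pointwise ODE driven by $f_u>0$ --- to run the instability estimate directly, in the spirit of the analysis in \cite{MKS12}.
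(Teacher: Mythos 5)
Your opening step --- exhibiting a positive point of essential spectrum via a Weyl-type singular sequence --- is close in spirit to Part I of the paper's proof, and your trick of solving the second linearized equation exactly by $\psi_n=-(\Delta+g_v-\lambda_0)^{-1}(g_u\phi_n)$ is a clean variant of the paper's contradiction argument (which shows, more, that the whole interval $[\inf_{\overline\Omega} f_u,\,\sup_{\overline\Omega} f_u]$ lies in $\sigma(\mathcal{L})$). But your non-resonance selection of $x_0$ is flawed: you justify choosing $\lambda_0=f_u(U(x_0),V(x_0))\notin\sigma(\Delta+g_v)$ on the grounds that $f_u(U(\cdot),V(\cdot))$ ``takes a whole interval of positive values'', yet nothing in the hypotheses prevents this function from being constant --- and in fact it \emph{is} constant for every constant steady state and in all of the paper's model examples (Gray--Scott, Gierer--Meinhardt, the carcinogenesis model), where $f_u(U(x),V(x))\equiv\lambda_0$. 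If that constant happens to be an eigenvalue of the Neumann operator $\Delta+g_v$, your construction breaks down exactly in the representative case. The paper's argument needs no such hypothesis (it chooses $\varphi$ to nearly cancel the second equation, cutting $g_u$ at level $\varepsilon$ to control the error); your version is repairable by finite-rank corrections orthogonal to the eigenspace, but as written it has a hole.

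The genuine gap, however, is the passage from spectral to nonlinear instability, which you explicitly defer. Since the unstable spectrum here is essential, neither Henry's theorem nor a naive Duhamel bootstrap applies as you hope: any bootstrap needs dichotomy estimates of the form $\|e^{t\mathcal{L}}(I-P)\|\le Ce^{\mu t}$ with $\mu$ strictly below the unstable growth rate, and these require a \emph{spectral gap}, i.e.\ control of the entire spectrum, not just the exhibition of one unstable point. This is where the paper does real work beyond anything in your sketch: eliminating $\varphi$ from the resolvent system reduces matters to the scalar Neumann problem \eqref{psi1r}--\eqref{psi2r}, the analytic Fredholm theorem (Theorem \ref{Fredholm}) shows that $\sigma(\mathcal{L})\setminus[\lambda_0,\Lambda_0]$ consists of isolated eigenvalues accumulating at most on $[\lambda_0,\Lambda_0]$, and sectoriality confines $\sigma(\mathcal{L})$ to a sector, so a strip $\{\mu\le \Re\lambda\le M\}$ with $0\le\mu<M\le\lambda_0$ free of spectrum exists; the spectral mapping theorem \eqref{spec} then verifies the hypotheses of the Friedlander--Strauss--Vishik instability theorem \cite{FSV} (Theorem \ref{thm:spec}). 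Incidentally, your stated obstacle is misdiagnosed: $\mathcal{L}$ \emph{is} a bounded perturbation of ${\rm diag}(0,\Delta)$ and \emph{does} generate an analytic semigroup (Lemma \ref{lem:lin}); the obstruction to quoting standard results is the continuous spectrum, not lack of analyticity. Finally, your claim that $C^2$-regularity makes the remainder ``quadratically small near $(U,V)$'' is norm-ambiguous and false in $Z=L^2\times L^2$ alone: the Nemytskii remainder is not $O(\|w\|_Z^2)$, and one needs precisely the two-space estimate $\|\mathcal{N}(w)\|_{L^2\times L^2}\le C\|w\|_{L^\infty\times L^\infty}\|w\|_{L^2\times L^2}$ of Lemma \ref{lem:nonlin}, with $X=L^\infty\times L^\infty$ as the well-posedness space --- the specific device of the two-space framework that your proposal lacks.
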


Inequality  \eqref{as:auto} can be interpreted as
{\it autocatalysis} in the dynamics of $u$
at the steady state $(U,V)$ at some point $\ x_0 \in {\Omega}$. 
Stability of the stationary solution is understood in the Lyapunov sense.
Moreover, we prove nonlinear instability of the stationary solutions of the problem \eqref{eq1}-\eqref{Neumann}
and not only their linear instability, {\it i.e.} the instability of zero solution of the corresponding linearized problem, see Section \ref{sec:instab} for more explanations.

Each constant solution $(\u,\v)\in \R^2$ of the problem  \eqref{seq1}-\eqref{sNeumann} is a particular case of a regular solution.
Thus, Theorem \ref{thm:non-const} provides a simple criterion for 
the diffusion-driven instability (DDI)
 of $(\u,\v)$.

\begin{cor}\label{cor:instab}
If a constant solution $(\u, \v)$ of the problem \eqref{eq1}-\eqref{ini} 
{\rm (}namely, $f(\u,\v)=$ and $g(\u,\v)=0${\rm )}
satisfies the inequalities 
\begin{equation}\label{DDI:cor}
f_u(\u,\v)> 0, \qquad 
f_u(\u,\v)+g_v(\u,\v)< 0, \qquad 
\det
\left(
\begin{array}{cc}
f_u(\u,\v)&f_v(\u,\v)\\
g_u(\u,\v)&g_v(\u,\v)
\end{array}
\right)> 0, 
\end{equation}
then it  has the  DDI property.
\end{cor}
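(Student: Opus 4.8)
The plan is to verify directly the two ingredients in the definition of the DDI property recalled in the Introduction: that $(\u,\v)$ is asymptotically stable with respect to spatially homogeneous perturbations, and unstable with respect to spatially heterogeneous ones. The second ingredient is essentially free. A constant state is a regular solution in the sense of Section \ref{sec2}: since $f(\u,\v)=0$ and $f_u(\u,\v)>0$ in particular gives $f_u(\u,\v)\neq 0$, the implicit function theorem produces a local $C^1$ function $k$ with $k(\v)=\u$ and $f\big(k(V),V\big)=0$ near $V=\v$, so that $U(x)\equiv\u=k(\v)$. The first inequality in \eqref{DDI:cor} is then exactly the autocatalysis condition \eqref{as:auto} evaluated at the constant profile $U(x)\equiv\u$, $V(x)\equiv\v$. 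Hence Theorem \ref{thm:non-const} applies verbatim and already yields that $(\u,\v)$ is an unstable steady state of the full initial-boundary value problem \eqref{eq1}-\eqref{ini}.

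It remains to establish stability against spatially homogeneous perturbations. First I would observe that a spatially constant initial datum stays spatially constant for all times, because $\Delta v\equiv 0$ on constants; thus the homogeneous dynamics is governed precisely by the kinetic system $u_t=f(u,v)$, $v_t=g(u,v)$, for which $(\u,\v)$ is an equilibrium. Linearizing at $(\u,\v)$ gives exactly the Jacobian matrix displayed in \eqref{DDI:cor}, with characteristic polynomial $\lambda^2-(f_u+g_v)\lambda+\det J$. By the Routh--Hurwitz criterion for $2\times 2$ matrices, both eigenvalues have strictly negative real parts if and only if the trace is negative and the determinant positive, i.e.\ exactly the second and third inequalities in \eqref{DDI:cor}. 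Strict negativity makes the equilibrium hyperbolic, so by the principle of linearized stability for ordinary differential equations $(\u,\v)$ is asymptotically stable for the kinetic system, which is the required homogeneous stability.

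Combining the two steps gives the DDI property, and I do not expect a substantive obstacle: the statement is a specialization of Theorem \ref{thm:non-const} glued to the classical trace/determinant analysis of the kinetic system. The only point deserving care is the logical coherence of the two parts. One must be sure that the instability delivered by Theorem \ref{thm:non-const} is genuinely carried by heterogeneous modes rather than being an artifact of the homogeneous dynamics; this is guaranteed precisely because the second and third inequalities render the kinetic (homogeneous) problem asymptotically stable, so the destabilization furnished by the theorem can only originate from spatially nonconstant perturbations. This is exactly what it means for the diffusion to drive the instability, completing the verification of the DDI property.
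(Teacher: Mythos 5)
Your proposal is correct and follows essentially the same route as the paper: instability under heterogeneous perturbations comes from Theorem \ref{thm:non-const} applied to the constant state (a regular solution, with the first inequality in \eqref{DDI:cor} being exactly the autocatalysis condition \eqref{as:auto}), while the second and third inequalities give asymptotic stability of the kinetic system via the trace--determinant criterion, just as in Remark \ref{rem:kinetic}. Your extra remarks --- invariance of spatially constant data under the flow, and the observation that the instability must therefore be carried by heterogeneous modes --- merely make explicit what the paper leaves implicit.
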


This corollary follows directly from Theorem \ref{thm:non-const}, because the second and the third  inequality in \eqref{DDI:cor} imply 
that  $(\u, \v)$ is stable under homogeneous perturbations; see Remark~\ref{rem:kin} for more details.


\subsection{Sufficient conditions for autocatalysis.}

Next, we show that DDI in the problem \eqref{eq1}-\eqref{ini} implies the autocatalysis condition \eqref{as:auto}.

We consider only a {\it non-degenerate}
 constant stationary solution $(\u,\v)$ of the reaction-diffusion-ODE system \eqref{eq1}-\eqref{Neumann}. Hence, in the remainder of this work we make the following  assumption.

\begin{assumption}[Non-degeneracy of the stationary solutions]\label{ass}
Let all stationary solutions, i.e. vectors $(\u,\v)\in\R^2$ such that
$
f(\u,\v)=0 
$
and 
$g(\u,\v)=0$, satisfy
\begin{equation}\label{non-deg}
f_u(\u,\v)+g_v(\u,\v)\neq 0, \quad 
\det
\left(
\begin{array}{cc}
f_u(\u,\v)&f_v(\u,\v)\\
g_u(\u,\v)&g_v(\u,\v)
\end{array}
\right)\neq 0, \quad \mbox{and} \quad f_u(\u,\v)\neq 0.
\end{equation}
\end{assumption}

\begin{rem}\label{rem:kin}
Let us note that the 
 first two conditions in  \eqref{non-deg} allow us to study the asymptotic 
stability of $(\u,\v)$ treated  as a solution of the corresponding system of ordinary differential equations
\begin{equation}\label{k1}
\frac{du}{dt}=f(u,v), \qquad \frac{dv}{dt}=g(u,v),
\end{equation}
by analyzing eigenvalues of the corresponding linearization matrix.
Indeed,
the conditions for linearized stability read
\begin{itemize}
\item[1.] If
\begin{equation}\label{k:stab}
f_u (\u, \v) + g_v (\u, \v) < 0 \qquad \mbox{and} \qquad 
\det
\left(
\begin{array}{cc}
f_u(\u,\v)&f_v(\u,\v)\\
g_u(\u,\v)&g_v(\u,\v)
\end{array}
\right) > 0,
\end{equation}
then the Jacobi matrix
\begin{equation}\label{Jacobi}
\left(
\begin{array}{cc}
f_u(\u,\v)&f_v(\u,\v)\\
g_u(\u,\v)&g_v(\u,\v)
\end{array}
\right)
\end{equation}
has all eigenvalues with negative real parts, and hence
$(\u, \v)$ is an asymptotically stable solution of  system \eqref{k1}.
\item[2.] On the other hand, if
\begin{equation}\label{k:instab}
\mbox{either}\qquad 
f_u (\u, \v) + g_v (\u, \v) > 0 \qquad \mbox{or} \qquad 
\det
\left(
\begin{array}{cc}
f_u(\u,\v)&f_v(\u,\v)\\
g_u(\u,\v)&g_v(\u,\v)
\end{array}
\right) < 0,
\end{equation}
then the linearization matrix  \eqref{Jacobi} has an eigenvalue 
with a positive real part, and consequently,  
the pair $(\u, \v)$ is an unstable solution of  \eqref{k1}.
\end{itemize}
\end{rem}

Now, we state a simple but   fundamental property of the stationary solutions of the problem \eqref{eq1}-\eqref{ini}.

\begin{prop}\label{prop1}
Assume that $(U,V)$ is a  non-constant regular solution of  the stationary problem
\eqref{seq1}-\eqref{sNeumann}.
Then, there exists $x_0\in\bOmega$, such that the
vector
$
(\u,\v)\equiv \big(U(x_0),  V(x_0)\big)
$
is a constant solution of the problem
\eqref{seq1}-\eqref{sNeumann}.
\end{prop}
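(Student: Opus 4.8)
The plan is to exploit the reduction of the stationary problem to the scalar elliptic equation \eqref{s:h}--\eqref{s:h:N}. Since $(U,V)$ is a \emph{regular} solution, equation \eqref{seq1} holds identically, so $f\big(U(x_0),V(x_0)\big)=0$ for \emph{any} choice of $x_0$; the whole task reduces to locating a single point where the second equation also becomes trivial, i.e.\ where $g\big(U(x_0),V(x_0)\big)=0$. Writing $U(x)=k\big(V(x)\big)$ and using $h(V)=g\big(k(V),V\big)$ as in \eqref{hgUk}, this amounts to finding $x_0\in\bOmega$ with $h\big(V(x_0)\big)=0$.

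First I would record the integral identity obtained by integrating \eqref{s:h} over $\Omega$. By the divergence theorem together with the Neumann condition \eqref{s:h:N},
\begin{equation*}
\int_\Omega \Delta V(x)\,dx=\int_{\partial\Omega}\partial_\nu V\,dS=0,
\end{equation*}
so that \eqref{s:h} forces
\begin{equation*}
\int_\Omega h\big(V(x)\big)\,dx=0.
\end{equation*}
Next I would invoke continuity: for a regular solution $V$ is a classical (hence continuous) solution of \eqref{s:h}, and since $g\in C^2$ and $k\in C^1$ the composite map $x\mapsto h\big(V(x)\big)$ is continuous on the compact set $\bOmega$. Its mean value over $\Omega$ equals zero, whence its maximum is nonnegative and its minimum is nonpositive; by the intermediate value theorem on the connected set $\bOmega$ there is a point $x_0$ with $h\big(V(x_0)\big)=0$. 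For that $x_0$ we then have $f\big(U(x_0),V(x_0)\big)=0$ and $g\big(U(x_0),V(x_0)\big)=h\big(V(x_0)\big)=0$, so the constant vector $(\u,\v)=\big(U(x_0),V(x_0)\big)$ solves \eqref{seq1}--\eqref{sNeumann}, the Laplacian and the boundary term vanishing trivially for a constant.

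I do not expect a serious obstacle here. In fact the non-constancy hypothesis is not needed for the existence of $x_0$ and is presumably retained only to make the statement meaningful in the context of the instability theorems, where one wants to read off a nearby constant steady state. The only points requiring mild care are the regularity and continuity of $V$ (guaranteed by elliptic regularity for the $C^1$ nonlinearity $h$) and the connectedness of $\Omega$, implicit in the word ``domain'', which is precisely what legitimizes the intermediate value step.
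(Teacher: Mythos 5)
Your proposal is correct and follows essentially the same route as the paper: integrate the stationary diffusion equation over $\Omega$, use the Neumann condition to kill the boundary term, and locate a zero of the continuous function $g\big(U(x),V(x)\big)=h\big(V(x)\big)$ by the intermediate value argument, with $f\big(U(x_0),V(x_0)\big)=0$ holding automatically for a regular solution. Your added observations (connectedness of $\Omega$ justifying the IVT step, and the fact that non-constancy is not actually needed) are accurate refinements of the paper's terser argument.
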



To prove Proposition \ref{prop1},  it suffices to 
 integrate  equation  \eqref{seq2}  over $\Omega$ and  to use the Neumann boundary condition \eqref{sNeumann} to obtain
$
\int_{\Omega} g\big(U(x),V(x)\big)\;dx =0.
$
Hence, there exists $x_0\in\bOmega$ such that $g\big(U(x_0),V(x_0)\big)=0$, because $U$ and $V$ are continuous.
Thus, by equation \eqref{seq1}, it also holds
$
f\big(U(x_0),V(x_0)\big)=0.
$

In the case described  by Proposition \ref{prop1}, we say that 
{\it a non-constant solution $(U,V)$ intersects a constant solution $(\u,\v)$}.
Now, we prove an important  property of  the constant solutions that are intersected by non-constant regular solutions.

\begin{prop}\label{thm:ineq}
 Let $\big(U(x),V(x)\big)$ be a regular non-constant stationary solution of problem \eqref{eq1}--\eqref{Neumann}
and assume that all constant stationary solutions that are intersected by 
$(U,V)$ are non-degenerate, {\it i.e.}~relations  \eqref{non-deg} are satisfied. 
Then, at least at one of those constant solutions,
denoted here by $(\u,\v)$, the following inequality holds
\begin{equation}\label{ineq}
\frac{1}{f_u(\u,\v)}\det 
\left(
\begin{array}{cc}
f_u(\u,\v)&f_v(\u,\v)\\
g_u(\u,\v)&g_v(\u,\v)\\
\end{array}
\right)
>0.
\end{equation}
\end{prop}


The proof of Proposition \ref{thm:ineq} 
is based on the properties of the solutions of the elliptic Neumann problem 
\eqref{s:h}--\eqref{s:h:N}
(see Theorem \ref{lem:N}, below), which we prove in  Section \ref{sec:const:non}.

\begin{rem}\label{rem0}
Every  non-degenerate constant solution $(\u,\v)$
of the problem \eqref{eq1}-\eqref{ini} satisfying inequality \eqref{ineq} is
{\it  unstable}. 
If both factors on the left-hand side of inequality \eqref{ineq}
are positive, then, in particular, the autocatalysis  
condition $f_u(\u,\v)>0$ is satisfied.
Hence, the constant solution $(\u,\v)$
is an unstable solution of the reaction-diffusion-ODE system \eqref{eq1}-\eqref{ini} by Theorem \ref{thm:non-const}.
On the other hand, 
  if both factors on the left-hand side of inequality \eqref{ineq} are
 negative, %
then, in particular,  the determinant in inequality \eqref{ineq} is negative and
the constant vector $(\u,\v)$
is an   unstable solution of the corresponding  kinetic system \eqref{k1},
see the alternative \eqref{k:instab} in Remark \ref{rem:kin}.
\end{rem}

\begin{rem}
It is worth to emphasize the following particular case of the phenomenon described in 
Remark \ref{rem0}, because we shall encounter  it   in our examples, further on.
Suppose that the problem \eqref{eq1}-\eqref{ini} has a non-constant regular
stationary solution  $(U,V)$ 
intersecting {\it only one} constant and non-degenerate
steady state  $(\u,\v)$ which is asymptotically stable as a solution of the kinetic system \eqref {k1}.
In such case, 
inequality \eqref{ineq} together with the second inequality in \eqref{k:stab} directly imply 
 the 
autocatalysis  condition
$
f_u(\u,\v)>0.
$
Thus, by Theorem \ref{thm:non-const},
 $(\u,\v)$ is an unstable solution of the
reaction-diffusion-ODE problem \eqref{eq1}-\eqref{ini}, {\it i.e.} the constant steady state $(\u,\v)$ has the DDI property.
Below, in Theorem~\ref{cor3}, we show that 
the non-constant stationary solution 
$(U,V)$ 
also satisfies the autocatalysis condition 
\eqref{as:auto}, and hence, it is unstable.
\end{rem}

%
%
%

Now, we are in the position to show that the autocatalysis condition
\eqref{as:auto} has to be satisfied in reaction-diffusion-ODE systems 
\eqref{eq1}--\eqref{Neumann}
with non-constant regular stationary solutions which intersect
 constant steady states with the DDI property.

\begin{theorem}\label{cor3}
Let  $(U,V)$ be a non-constant regular stationary solution  of problem \eqref{eq1}-\eqref{ini}.
Denote by  $(\u,\v)$ 
 a non-degenerate constant solution  
 which intersects $(U,V)$,
 and  satisfies inequality \eqref{ineq}.
 Assume that  $(\u,\v)$ is an asymptotically  stable solution of  the kinetic system \eqref{k1}.
Then, 
there exists $x_0\in\Omega$ such that 
$$
f_u\big(U(x_0),V(x_0)\big)=f_u(\u,\v)>0.
$$
\end{theorem}

The following remark emphasizes importance 
 of  the above results.

\begin{rem}\label{rem:Turing}
The instability results from Theorem \ref{thm:non-const} and Corollary \ref{cor:instab} combined with Theorem \ref{cor3}
can be summarized in the following way.
This is a classical idea that, in a system of reaction-diffusion equations with a constant solution having
the DDI property,  one expects
 stable patterns to appear around that constant steady state. 
Such stationary solutions are called the {\it Turing patterns}.
For  the initial-boundary value problem for a reaction-diffusion-ODE system with a single diffusion equation
\eqref{eq1}-\eqref{Neumann},
such stationary solutions can be constructed in the case of several models of interest (see Section~\ref{sec:examp}).
However,  the same mechanism that
destabilizes constant solutions of such models, also destabilizes the non-constant solutions. In other words, {\it all Turing patterns in  the reaction-diffusion-ODE
problems  \eqref{eq1}-\eqref{Neumann} are unstable.}
\end{rem}

\subsection{Instability of non-regular steady states}\label{sec2.3}
The initial-boundary value problem \eqref{eq1}-\eqref{ini} may also have non-regular steady states in the case when the equation $f(U,V)=0$ is not uniquely solvable. Choosing different branches of solutions of the equation $f\big(U(x),V(x)\big)=0$,
we obtain the relation $ U(x)=k\big(V(x)\big)$ with a discontinuous, piecewise $C^1$-function $k$.
Here, we recall that a pair
 $\big(U,V\big)\in L^\infty(\Omega)\times W^{1,2}(\Omega)$ is 
a {\it weak solution} of problem   \eqref{seq1}-\eqref{sNeumann} 
if the equation $f\big(U(x),V(x)\big)=0$ is satisfied for almost all $x\in \Omega$ and if
\begin{equation*}
-\int_\Omega \nabla V(x)\cdot \nabla \varphi(x)\;dx +\int_\Omega g\big(U(x),V(x)\big)\varphi(x)\;dx=0
\end{equation*}
holds for all test functions $\varphi\in W^{1,2}(\Omega)$.

In this work, we do not prove the existence of such discontinuous solutions
and we refer the reader to 
classical works \cite{ATW88,MTH80,Sakamoto}
as well as 
to our recent paper \cite[Thm.~2.9]{MKS12} for information about how to construct such solutions 
to one dimensional problems using phase portrait analysis.
Our goal is to formulate  a counterpart of the autocatalysis condition  \eqref{as:auto},  
which leads to instability of the weak (including  discontinuous) 
stationary solutions.

\begin{theorem}\label{thm:weak}
Assume that $(U, V)$ is a  weak   bounded solution of the problem \eqref{seq1}-\eqref{sNeumann} satisfying the following counterpart of the autocatalysis 
condition 
\begin{equation}\label{auto:weak}
\lambda_0\leq  f_u(U(x),V(x))\leq \Lambda_0
\qquad \text{for almost  all} \quad x\in \overline\Omega,
\end{equation}
for some constants $0<\lambda_0\leq \Lambda_0<\infty$. Suppose, moreover, that 
there exists $x_0\in\Omega$ such that $f_u(U,V)$ is continuous in a neighborhood of $x_0$.
Then, $(U, V)$ is an unstable solution of the initial-boundary value problem \eqref{eq1}-\eqref{ini}.
\end{theorem}

We prove Theorem \ref{thm:weak} in Subsection \ref{subsec:instab} by applying ideas developed
for the analysis of the Euler equation and other fluid dynamics models.
In that approach, it suffices to show that  the spectrum of the linearized operator
\begin{equation*}\label{lin:0}
\L
\left(
\begin{array}{c}
\tu\\
\tv
\end{array}
\right)
\equiv
\left(
\begin{array}{c}
0 \\
\Delta \tv
\end{array}
\right)
+
\left(
\begin{array}{cc}
f_u(U, V)&f_v(U, V)\\
g_u(U, V)&g_v(U, V)
\end{array}
\right)
\left(
\begin{array}{c}
\tu\\
\tv
\end{array}
\right)
\end{equation*}
with the Neumann boundary condition
$
\partial_\nu \tv=0,
$ 
has so-called {\it spectral gap}, namely, there exists a subset 
 of the spectrum $\sigma(\L)$, which 
has a positive real part, 
 separated from zero. Here, we prove  that $\sigma(\L)\subset \C$
consists of the set $\{f_u(U(x),V(x))\, :\, x\in \overline \Omega \}$ and of 
isolated eigenvalues of $\L$, see  Section \ref{sec:instab}
and, in particular, Fig.~\ref{fig1}
 for more detail.
One should emphasize that the instability of steady states from 
Theorems~\ref{thm:non-const} and \ref{thm:weak}
is caused not by an eigenvalue with a positive real part, but rather by  positive 
numbers from the set ${\rm Range}\, f_u(U,V)$ which is
contained in the
continuous spectrum of the operator $(\L, D(\L))$, see Theorem  \ref{thm:spec:L} below for more details.

In fact, in the case of particular nonlinearities, we do not need to assume that  condition \eqref{auto:weak} 
holds true for almost  all $x\in\Omega$.
Indeed, 
 if $f(0,v)=0$, one may have stationary solutions $U=U(x)$ 
such that $U(x)=0$ on a subset of $\Omega$  and $U(x)>0$ on a complement.
 Such stationary solutions can be, for example, constructed for
  the carcinogenezis model \eqref{eqc1}-\eqref{eqc3} presented below (see \cite{MKS12}), and for several other one-dimensional equations discussed in ref.
\cite{MTH80}.
In the following corollary, we show instability of the
discontinuous stationary solutions, under the autocatalysis condition
 only for $x\in \Omega$ such that $U(x)\neq 0$.

\begin{cor}[Instability of weak solutions]\label{cor:weak}
Assume that the nonlinear term  in the equation \eqref{eq1} satisfies  $f(0,v)=0$ for all $v\in \R$.
Suppose that  $(U, V)$ is a  weak  bounded  solution of the problem \eqref{seq1}-\eqref{sNeumann}
with the following property: There exist  constants $0<\lambda_0<\Lambda_0<\infty$ 
such that 
\begin{equation}\label{as:auto:dis}
 \lambda_0\leq f_u \big(U(x), V(x)\big) \leq \Lambda_0 \qquad
\text{for almost all}\quad  x\in\Omega,\quad \text{where}\quad  U(x)\neq 0.
\end{equation}
 Moreover, suppose that there exists $x_0\in \Omega$ such that $U(x_0)\neq 0$ and 
the functions 
$U=U(x)$ and $f_u(U,V)$ are continuous in the neighborhood of $x_0$.
Then, $(U, V)$ is an unstable solution of the initial-boundary value problem \eqref{eq1}-\eqref{ini}.
\end{cor}

\begin{rem}
A typical nonlinearity satisfying the assumptions of Corollary  \ref{cor:weak}
has the form $\mathrm{f(u,v)=r(u,v)u}$. It can be found 
in the models, where the unknown variable
$u$ evolves  according to the Malthusian law with a density dependent growth rate $r$.
\end{rem}

 We defer the proofs of Theorems \ref{thm:non-const}
and \ref{thm:weak} 
as well as of 
Corollary \ref{cor:weak}
to Subsection~\ref{subsec:instab}.
Theorem \ref{cor3} is somewhat independent of  Theorems \ref{thm:non-const}
and \ref{thm:weak} and it is proven in Section \ref{sec:const:non}.

\section{Model examples}\label{sec:examp}

In this section, our results  are illustrated by applying them to some
models  from mathematical biology.

\subsection{Gray-Scott model.}

First we consider a reaction-diffusion-ODE model with nonlinearities as in the celebrated 
Gray-Scott system describing pattern formation in chemical reactions ~\cite{GrayScott}. The system with a non-diffusing activator takes the form
\begin{align}
\label{eq1s} u_t  &=  - (B+k) u + u^2 v&     \text{for}\quad &x\in\overline{\Omega}, \; t>0, && \\
\label{eq2s} v_t  &=   \Delta v -u^2 v+B(1-v)&  \text{for}\quad &x\in \Omega, \; t>0,&&
\end{align}
with the zero-flux boundary condition for $v$
and with nonnegative  initial conditions. The constants $B$ and $k$ are assumed to be positive.
The system exhibits the instability phenomenon described  in
Section  \ref{sec:main}.

Here,  every regular positive stationary solution $(U,V)$ of the Neumann boundary-initial value problem for equations \eqref{eq1s}-\eqref{eq2s} has to satisfy  the relation $U = (B+k)/ V$, hence,
\begin{align}
 &\Delta V-B V-\frac{(B+k)^2}{V}+B=0&  \text{for}\quad &x\in \Omega, &&\label{S:s}\\
& \partial_{\nu}V  =  0 & \text{for}\quad &x\in \partial\Omega.&&\label{S:s:N} 
\end{align}
All continuous positive solutions of such boundary value problem in one dimensional case
have been constructed in our recent paper \cite[Sec.~5]{MKS12}. 
A construction of discontinuous stationary solutions of the reaction-diffusion-ODE problem for \eqref{eq1s}-\eqref{eq2s} can be also found in \cite[Thm.~2.9]{MKS12}.

Instability 
results in Theorems \ref{thm:non-const} and \ref{cor3} 
imply that {\it all stationary solutions}  (constant, regular as well as discontinuous) 
of the reaction-diffusion-ODE problem \eqref{eq1s}-\eqref{eq2s} are unstable under heterogeneous perturbations. 
For the proof, 
it suffices to notice that 
the autocatalysis assumptions \eqref{as:auto} and \eqref{auto:weak}
are satisfied, since, for $U=(B+k)/V$,  the function $f_u\big(U(x),V(x)\big)$ is independent of $x$ and satisfies
$$
 f_u \big(U(x), V(x)\big) = -(B+k) + 2 U(x)V(x) = B+k > 0
\qquad \text{for all} \quad x\in \Omega.
$$

\subsection{Model of early carcinogenesis}\label{sub:carc}
The main motivation for the  research reported in this work 
has been the study
of the 
 reaction-diffusion system  of three ordinary/partial differential equations 
modeling   the diffusion-regulated 
growth of a cell population of the following form
\begin{align}
u_t&=\Big(\frac{a v}{u+v} -d_c\Big) u  &\text{for}\ x\in \overline\Omega, \; t>0, \label{eqc1}\\
v_t &=-d_b v +u^2 w -d v   &\text{for}\ x\in \overline\Omega, \; t>0, \label{eqc2}\\
w_t &= D \Delta w -d_g w -u^2 w +d v +\kappa_0  &\text{for}\ x\in \Omega, \; t>0, \label{eqc3}
\end{align}
supplemented with zero-flux boundary conditions for the function $w$
and with nonnegative initial conditions,  \cite{MKS12}.
Here, the letters $a , d_c,d_b, d_g, d, D, \kappa_0 $  denote positive constants.


The theory developed in this paper applies to a reduced two-equation version of  the model \eqref{eqc1}-\eqref{eqc3}, obtained using a  quasi-steady state approximation of the dynamics of $v$.  Applying  the quasi-steady state approximation in equation \eqref{eqc2} (i.e., setting  $v_t\equiv 0$), we obtain the relation
$
v={u^2 w}/({d_b+d}),
$
which after substituting 
into  the remaining equations \eqref{eqc1} and \eqref{eqc3} yields the 
initial-boundary value problem for the following reaction-diffusion-ODE system
\begin{align}
u_t&=\Big(\frac{a uw}{d_b+d+uw} -d_c\Big) u &\text{for}\ x\in \overline\Omega, \; t>0, \label{eqn1}\\
w_t &= D\Delta w -d_g w -\frac{d_b}{d_b+d} u^2 w  +\kappa_0 & \text{for}\ x\in \Omega, \; t>0. \label{eqn2}
\end{align}
A rigorous derivation of the two equation model \eqref{eqn1}-\eqref{eqn2} from the model \eqref{eqc1}-\eqref{eqc3}
as well as other properties of the solutions to \eqref{eqn1}-\eqref{eqn2} are presented in Appendix A of this work. Moreover, numerical simulations suggest that  the two-equation model exhibits qualitatively the same dynamics as 
system
\eqref{eqc1}-\eqref{eqc3}.

The autocatalysis assumptions \eqref{as:auto} and \eqref{auto:weak}
are
satisfied by  simple calculations, similar to those in the previous example (see \cite{MKS12} for more details).
As a consequence, {\it all nonnegative stationary solutions} of the system \eqref{eqn1}-\eqref{eqn2}
(regular and non-regular)
are  unstable due to  Theorems~\ref{thm:non-const} and \ref{thm:weak}. 
This corresponds to our results on the  three-equation model \eqref{eqc1}-\eqref{eqc3} proved in ref. \cite{MKS12}.
 
Stability analysis of the space homogeneous solutions of the two equation model \eqref{eqn1}-\eqref{eqn2}  is reported in Appendix B.
In particular, by Remark \ref{rem0},
 constant steady states of  \eqref{eqn1}-\eqref{eqn2} are either unstable solutions of the corresponding kinetic system or they have the DDI property.


\subsection{Model of glioma invasion}\label{sub:glioma}

Our results can be also applied to the 
 {\it ``go-or-grow'' model}  introduced in ref. \cite{Pham}
to investigate the dynamics of a
population of glioma cells switching between a migratory and a proliferating phenotype in dependence on the local cell density. The model consists of two reaction-diffusion equations 
\begin{align}
u_t&=- \mu \Big(\Gamma(u+v)u-\big(1-\Gamma(u+v)\big)v\Big)
+ru\big(1-(u+v)\big)\label{gog1}
\\
v_t&= \Delta v +\mu \Big(\Gamma(u+v)u-\big(1-\Gamma(u+v)\big)v\Big),\label{gog2}
\end{align}
where tumor cells are decomposed into two sub-populations: a migrating population with density $v(x,t)$ and a proliferating population with  density 
$u(x,t)$ (Caution: we changed the notation from \cite{Pham}, where $\rho_1=v$ and $\rho_2=u$).
In this model, the constant $\mu>0$ is the rate at which cells change their phenotype and the constant $r\geq 0$ is the proliferation rate. The function $\Gamma=\Gamma(\rho)$ has the following explicit form
\begin{equation*}
\Gamma(\rho)=\frac12\big(1\pm \tanh (\alpha(\rho^*-\rho))\big)
\end{equation*}
with constant $\alpha>0$ and $\rho^*>0$. It describes two complementary mechanisms for the phenotypic transmissions. 

 {\it Go-or-rest model.} Let us first look at a particular version of model 
\eqref{gog1}-\eqref{gog2} with no  
proliferation rate (namely $r=0$)
 which is called in \cite{Pham} as the  ``go-or-rest model'':
\begin{align}
u_t&=- \mu \Big(\Gamma(u+v)u-\big(1-\Gamma(u+v)\big)v\Big)
\label{gor1}
\\
v_t&= \Delta v +\mu \Big(\Gamma(u+v)u-\big(1-\Gamma(u+v)\big)v\Big).\label{gor2}
\end{align}
One can check by a simple calculation that this system supplemented with the Neumann boundary condition for $v(x,t)$ has a one parameter family of constant stationary solutions:
\begin{equation}\label{const:k}
\Big(k-\Gamma(k)k, \, \Gamma(k)k\Big) \qquad \text{for each fixed $k\in\R$}.
\end{equation}
These vectors are degenerate ({\it i.e.} they do not satisfy Assumption \ref{ass}) because the determinant 
in \eqref{non-deg} vanishes in this case.
However, by an elementary analysis of the phase portrait of the system of the ODEs,
\begin{equation}\label{ODE:k}
\frac{d}{dt}\u=- \mu \Big(\Gamma(\u+\v)\u-\big(1-\Gamma(\u+\v)\big)\v\Big),
\quad 
\frac{d}{dt}\v= \mu \Big(\Gamma(\u+\v)\u-\big(1-\Gamma(\u+\v)\big)\v\Big),
\end{equation}
one can show that vectors \eqref{const:k} are {\it stable solutions of system
\eqref{ODE:k}.}
The constant steady state \eqref{const:k} satisfies the autocatalysis condition \eqref{as:auto}
if 
\begin{equation}\label{auto:gor}
\Gamma'(k)k+\Gamma(k)<0,
\end{equation}
see \cite[Ch.~3.1]{Pham} for further discussion.
Thus, by our Theorem \ref{thm:non-const}, constant stationary solutions \eqref{const:k}  are 
{\it unstable solutions} of the 
reaction-diffusion-ODE system \eqref{gor1}-\eqref{gor2}.

System \eqref{gor1}-\eqref{gor2} has no heterogeneous stationary solutions, because the counterpart of 
the boundary-value problem \eqref{seq1}-\eqref{sNeumann} reduces in this case to the problem
$$
\Delta V=0 \quad \text{in $\Omega$,}\qquad \partial_\nu V=0\quad \text{on $\partial \Omega$},
$$
which has constant solutions, only.

{\it Go-or-grow model.} Let us now briefly sketch an analogous reasoning in the case of the more general model \eqref{gog1}-\eqref{gog2} with $r>0$. 
It has two constant stationary solutions ({\it cf.} \cite{Pham}):
$$
(\u,\v)=(0,0)\quad\text{and}\quad (\u,\v)=\big(1-\Gamma(1),\, \Gamma(1)\big).
$$
The nontrivial steady state $\big(1-\Gamma(1),\,\Gamma(1)\big)$ 
is a {\it stable solution} of the kinetic system corresponding to  \eqref{gog1}-\eqref{gog2} and
it satisfies the autocatalysis condition
\eqref{as:auto} if $(\Gamma'(1)+\Gamma(1))+(1-\Gamma(1))<0$ (see \cite{Pham}). 
In this case, 
by Theorem \ref{thm:non-const},
it is an {\it unstable solution} of the 
reaction-diffusion-ODE system \eqref{gog1}-\eqref{gog2}.

It is beyond the scope of this work to study positive heterogeneous stationary solutions
of the go-or-grow model with $r>0$. 
However, if there exist regular and strictly positive stationary solutions, 
then under the assumption  $(\Gamma'(1)+\Gamma(1))+(1-\Gamma(1))<0$,
they must be unstable by 
Theorems \ref{thm:non-const} and \ref{cor3}, see also Remark~\ref{rem:Turing}.
In conclusion, the structures shown in simulations of the models in ref.  \cite{Pham} are not Turing patterns.


\section{Instability of the stationary solutions}\label{sec:instab}

\subsection{Existence of solutions}
We begin our study of properties of solutions to the initial-boundary
value problem \eqref{eq1}--\eqref{ini} by recalling  results on
local-in-time existence and uniqueness of solutions for all bounded initial conditions. %
\begin{theorem}[Local-in-time solution]\label{thm:exist}
Assume that $u_0, v_0 \in L^\infty (\Omega)$. %
Then, there exists $T = T(\|u_0 \|_\infty,\, \|v_0 \|_\infty) > 0$ such
 that the initial-boundary value problem
 \eqref{eq1}--\eqref{ini} has a unique local-in-time mild
 solution $u, v \in L^\infty \big([0, T],\, L^\infty (\Omega)
 \big)$. %
\end{theorem}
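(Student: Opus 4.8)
The plan is to establish local-in-time existence and uniqueness for the reaction-diffusion-ODE system \eqref{eq1}--\eqref{ini} by reformulating it as a fixed-point problem in the Banach space $X_T = L^\infty\big([0,T],\,L^\infty(\Omega)\big)\times L^\infty\big([0,T],\,L^\infty(\Omega)\big)$ and applying the Banach contraction mapping principle. The essential structural observation is that the two equations play very different roles: equation \eqref{eq1} has no diffusion and is, for each fixed $x\in\overline\Omega$, simply an ordinary differential equation in $t$, whereas equation \eqref{eq2} carries the Laplacian with Neumann boundary conditions. Accordingly, I would treat the second equation via the variation-of-constants (Duhamel) formula using the heat semigroup $\{e^{t\Delta}\}_{t\ge 0}$ generated by the Neumann Laplacian on $L^\infty(\Omega)$, while treating the first equation by direct integration in time.

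First I would write down the mild formulation. A pair $(u,v)$ is a mild solution if
\begin{align}
u(t) &= u_0 + \int_0^t f\big(u(\tau),v(\tau)\big)\,d\tau,\label{mild:u}\\
v(t) &= e^{t\Delta}v_0 + \int_0^t e^{(t-\tau)\Delta} g\big(u(\tau),v(\tau)\big)\,d\tau.\label{mild:v}
\end{align}
Here $e^{t\Delta}$ is the analytic semigroup associated with the Neumann problem, which is a contraction on $L^\infty(\Omega)$ (by the maximum principle), so that $\|e^{t\Delta}\phi\|_\infty\le\|\phi\|_\infty$. Denoting by $\Phi$ the map sending $(u,v)$ to the right-hand sides of \eqref{mild:u}--\eqref{mild:v}, the goal is to show that $\Phi$ maps a suitable closed ball in $X_T$ into itself and is a contraction there, for $T$ small enough depending only on $\|u_0\|_\infty$ and $\|v_0\|_\infty$.

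Next I would carry out the standard estimates. Fix $R = 2\big(\|u_0\|_\infty+\|v_0\|_\infty\big)+1$ and let $\mathcal B_R\subset X_T$ be the closed ball of radius $R$ centered at the origin. Since $f,g\in C^2$, they are Lipschitz continuous on the compact set $\{|u|,|v|\le R\}$; let $L$ be a common Lipschitz constant and $M=\sup\{|f|,|g|\}$ the sup of the nonlinearities on that set. Using the contraction property of $e^{t\Delta}$ and the trivial bound on the time integral in \eqref{mild:u}, one obtains $\|\Phi(u,v)\|_{X_T}\le \|u_0\|_\infty+\|v_0\|_\infty + 2MT$, so $\Phi$ maps $\mathcal B_R$ into itself once $2MT\le R/2$. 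For the contraction estimate, subtracting the mild formulas for two pairs $(u_1,v_1),(u_2,v_2)\in\mathcal B_R$ and again using $\|e^{t\Delta}\|\le 1$ together with the Lipschitz bound gives $\|\Phi(u_1,v_1)-\Phi(u_2,v_2)\|_{X_T}\le 2LT\,\|(u_1,v_1)-(u_2,v_2)\|_{X_T}$, which is a strict contraction once $2LT<1$. Choosing $T=T(\|u_0\|_\infty,\|v_0\|_\infty)$ small enough to satisfy both constraints yields a unique fixed point in $\mathcal B_R$, hence a unique local mild solution in the asserted class.

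The main subtlety, and the step I would spend the most care on, is the functional-analytic handling of the semigroup on the non-reflexive space $L^\infty(\Omega)$: the Neumann heat semigroup is not strongly continuous at $t=0$ on all of $L^\infty$, so continuity-in-time of $t\mapsto e^{t\Delta}v_0$ must be argued carefully (for instance by working with the weak-$*$ topology, or by exploiting the smoothing $e^{t\Delta}:L^\infty\to C(\overline\Omega)$ for $t>0$ and handling the boundary layer near $t=0$ separately), and one must confirm that the Duhamel integral in \eqref{mild:v} indeed defines an element of $L^\infty([0,T],L^\infty(\Omega))$ with the claimed bound. I would therefore either invoke the standard $L^\infty$ semigroup theory for sectorial operators as in \cite{R84}, or restrict attention to the mild-solution framework where only the integrated contraction estimate is needed, which is precisely why the statement asserts existence of a \emph{mild} solution rather than a classical one. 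Uniqueness follows from the contraction property together with a routine Gronwall argument on any common interval of existence.
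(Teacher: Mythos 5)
Your proposal is correct and follows exactly the route the paper takes: the paper defines mild solutions via the same integral equations \eqref{D1}--\eqref{D2}, observes that $f$ and $g$ are locally Lipschitz since they are $C^2$, and applies the Banach fixed point theorem, deferring the routine estimates to \cite[Thm.~1, p.~111]{R84} and \cite{MKS12}. You have simply written out in full the contraction argument that the paper delegates to those references, including the $L^\infty$-contractivity of the Neumann heat semigroup and the Gronwall-based uniqueness, so there is nothing to add.
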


We recall that a mild solution of problem \eqref{eq1}--\eqref{ini} is a pair of
measurable functions $u, v : [0, T] \times \overline{\Omega} \mapsto
\R$ satisfying the following system of integral equations
\begin{align}
u(x, t) &= u_0 (x) + \int_0^t f\big(u(x, s), v(x, s)\big)\, ds, \label{D1} \\
v(x, t) &= e^{t\Delta}v_0 (x) + \int_0^t e^{(t-s)\Delta} g\big(u(x, s), v(x,
 s)\big)\, ds,\label{D2}
\end{align}
where $e^{t\Delta}$ is the semigroup of linear operators generated by
Laplacian with the Neumann boundary condition. %
Since our nonlinearities $f = f(u, v)$ and $g = g(u, v)$ are locally
Lipschitz continuous, to construct a local-in-time unique solution of
system \eqref{D1}--\eqref{D2}, it suffices to apply the Banach fixed
point theorem. %
Details of such a reasoning and the proof of Theorem \ref{thm:exist}
in a case of much more general systems of reaction-diffusion equations
can be found {\it eg.} in \cite[Thm.~1, p.~111]{R84},
see also 
our recent work
\cite[Ch. 3]{MKS12} for a construction of nonnegative solutions of
particular reaction-diffusion-ODE problems. %

\begin{rem}
If $u_0$ and $v_0$ are more regular, {\it i.e.} if for some $\alpha\in (0,1)$ 
we have $u_0 \in C^\alpha
 (\overline{\Omega})$, $v_0 \in C^{2+\alpha}(\overline{\Omega})$
 and $\partial_\nu v_0 = 0$ on $\partial \Omega$, then the mild
 solution of problem  \eqref{eq1}--\eqref{ini} is smooth and satisfies $u \in C^{1, \alpha}\big([0, T]
 \times \overline{\Omega} \big)$ and $v \in C^{1 + \alpha/2,\, 2 +
 \alpha}\left([0, T] \times \overline{\Omega}\right)$.
We refer the reader  to \cite[Thm.~1, p.~112]{R84} as well as to \cite{GSV} for studies of general
reaction-diffusion-ODE systems in the H\"older spaces.
\end{rem}

\subsection{Linearization of reaction-diffusion-ODE problems.}
Let $(U,V)$ be a  stationary solution of problem 
\eqref{eq1}-\eqref{ini} 
--- either regular as discussed in Subsection \ref{sec2}  or weak (and possibly discontinuous) as defined in Subsection \ref{sec2.3}.
Substituting 
$$
u=U+\tu\qquad \text{and}\qquad v=V+\tv
$$
into \eqref{eq1}-\eqref{eq2}
we obtain the initial-boundary value problem for the perturbation
$(\tu,\tv)$ of the form \eqref{eq:a}:
\begin{equation}\label{perturb}
\frac{\partial}{\partial t}
\left(
\begin{array}{c}
\tu \\
\tv
\end{array}
\right)
=\L
\left(
\begin{array}{c}
\tu\\
\tv
\end{array}
\right)+
\N
\left(
\begin{array}{c}
\tu\\
\tv
\end{array}
\right),
\end{equation}
with the Neumann boundary condition,
$
\partial_\nu \tv=0,
$ 
where the linear operator $\L$ and the nonlinearity 
$\N$ are defined by formulas \eqref{lin} and \eqref{nonlin}, resp.

\begin{lemma}\label{lem:lin}
Let $(U,V)$ be a bounded (not necessarily regular) stationary solution of problem 
\eqref{eq1}-\eqref{ini}. We consider the following linear system
\begin{equation}\label{lin}
\left(
\begin{array}{c}
\tu_t \\
\tv_t
\end{array}
\right)
=\L
\left(
\begin{array}{c}
\tu\\
\tv
\end{array}
\right)
\equiv
\left(
\begin{array}{c}
0 \\
\Delta \tv
\end{array}
\right)
+
\left(
\begin{array}{cc}
f_u(U, V)&f_v(U, V)\\
g_u(U, V)&g_v(U, V)
\end{array}
\right)
\left(
\begin{array}{c}
\tu\\
\tv
\end{array}
\right)
\end{equation}
with the Neumann boundary condition
$
\partial_\nu \tv=0.
$ 
Then, for every $p\in (1,\infty)$,
the operator $\L$ with the domain $D(\L)=L^p(\Omega)\times W_N^{2,p}(\Omega)$ 
generates 
an analytic  semigroup $\{e^{t\L}\}_{t\geq 0}$ of linear operators on $L^p(\Omega)\times L^p(\Omega)$,
which satisfies ``the spectral mapping theorem'':
\begin{equation}\label{spec}
\sigma (e^{t\L})\setminus\{0\}=e^{t\sigma(\L)}\qquad 
\text{for every} \quad t\geq 0.
\end{equation}
\end{lemma}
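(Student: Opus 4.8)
The plan is to write $\L=A+B$ as a bounded perturbation of a simpler operator and then invoke standard semigroup theory. I set
\[
A\begin{pmatrix}\tu\\ \tv\end{pmatrix}=\begin{pmatrix}0\\ \Delta\tv\end{pmatrix},\qquad
B=\begin{pmatrix}f_u(U,V)&f_v(U,V)\\ g_u(U,V)&g_v(U,V)\end{pmatrix},
\]
the second being understood as the operator of multiplication by the Jacobian of $(f,g)$ evaluated at $(U,V)$, and $A$ carrying the domain $D(A)=L^2(\Omega)\times\{w\in W^{2,2}(\Omega):\partial_\nu w=0\}$. Since $(U,V)$ is bounded and $f,g\in C^2$, each entry $f_u(U,V),\dots,g_v(U,V)$ lies in $L^\infty(\Omega)$, so $B$ is a bounded operator on $Z=L^2(\Omega)\times L^2(\Omega)$; in particular $D(\L)=D(A)$.

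First I would show that $A$ generates an analytic semigroup on $Z$. The key point is that $A$ is block-diagonal: it is the direct sum of the zero operator on the first factor and the Neumann Laplacian $\Delta$ on the second, so explicitly $e^{tA}(\tu,\tv)^{T}=(\tu,\,e^{t\Delta}\tv)^{T}$. The Neumann Laplacian is self-adjoint and nonpositive, hence sectorial, so $\{e^{t\Delta}\}_{t\ge0}$ is a bounded analytic semigroup; the zero operator is bounded and generates the (entire, hence analytic) constant semigroup $I$. Because the resolvent of $A$ is block-diagonal, the sectorial estimate for $A$ reduces to the sectorial estimates on the two blocks, and therefore $A$ is sectorial and generates an analytic semigroup on $Z$.

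Next I would add back $B$. Since $B$ is bounded, the classical bounded-perturbation theorem for generators of analytic semigroups applies, so $\L=A+B$ with $D(\L)=D(A)$ again generates an analytic semigroup $\{e^{t\L}\}_{t\ge0}$ on $Z$. Finally, the spectral mapping identity \eqref{spec} follows from a classical principle: analytic semigroups are immediately norm-continuous, i.e. $t\mapsto e^{t\L}$ is continuous in operator norm for $t>0$, and for norm-continuous semigroups the spectral mapping theorem $\sigma(e^{t\L})\setminus\{0\}=e^{t\sigma(\L)}$ holds for every $t\ge0$; see the semigroup literature quoted for \cite{Henry}.

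I expect the only genuinely delicate point to be conceptual rather than technical: the first equation carries no diffusion, so $A$ is degenerate and $\L$ is \emph{not} a standard elliptic operator. Indeed its spectrum will contain the essential/continuous part coming from the undifferentiated first component, namely the range of $f_u(U,V)$. One might therefore worry that the spectral mapping theorem could fail in the presence of continuous spectrum. The resolution, and the reason the lemma is worth isolating, is that analyticity (immediate norm continuity) guarantees the spectral mapping theorem for the whole spectrum regardless of its point/continuous decomposition; hence the degeneracy of $A$ causes no difficulty once the analyticity of $\{e^{t\L}\}_{t\ge0}$ has been established.
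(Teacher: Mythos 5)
Your proposal is correct and follows essentially the same route as the paper: the identical decomposition of $\L$ into the block-diagonal operator (zero on the first component, Neumann Laplacian on the second) plus the bounded multiplication operator given by the Jacobian of $(f,g)$ at the bounded solution $(U,V)$, then the bounded-perturbation theorem for analytic semigroups, and finally the spectral mapping theorem via norm continuity of the semigroup for $t>0$ (the paper cites \cite{EN} for both steps, using the equivalent ``eventually norm-continuous'' formulation). Your explicit closing remark that norm continuity yields the spectral mapping identity irrespective of the point/continuous decomposition of $\sigma(\L)$ correctly identifies why the degeneracy of the first equation is harmless, exactly as the paper intends.
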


\begin{proof}
Here, we use the Sobolev space 
$$
W_N^{2,p}(\Omega)= \{u\in W^{2,p}(\Omega)\;:\; \partial_\nu u=0 \quad \text{on}\quad 
\partial \Omega\}.
$$
Notice that $\L$ is a bounded perturbation of the operator
$$
\L_0
\left(
\begin{array}{c}
\tu\\
\tv
\end{array}
\right)
\equiv
\left(
\begin{array}{c}
0 \\
\Delta \tv
\end{array}
\right)
$$ 
with the domain $D(\L_0)=L^p(\Omega)\times W_N^{2,p}(\Omega)$, which generates an analytic 
semigroup on $L^p(\Omega)\times L^p(\Omega)$ for each $p\in (1,\infty)$.
Thus, it is well-known (see {\it e.g.} \cite[Ch.~III.1.3]{EN} and \cite[Theorems  2.15 and 2.19]{Y10}) that 
the same property holds true for the operator $(\L,D(\L))$.

The spectral mapping theorem for the semigroup
$\{e^{t\L}\}_{t\geq 0}$
expressed by equality \eqref{spec}
 holds 
true if the semigroup is {\it e.g.}  
eventually norm-continuous (see \cite[Ch.~IV.3.10]{EN}). 
Since every analytic semigroup of linear operators is eventually norm-continuous, we obtain immediately relation \eqref{spec}
(cf. \cite[Ch.~IV, Corollary 3.12]{EN}).
\end{proof}

Next, we show certain elementary estimate  of the nonlinearity in equation \eqref{perturb}.

\begin{lemma}\label{lem:nonlin}
Let $(U,V)$ be a bounded (not necessarily regular) stationary solution of problem 
\eqref{eq1}-\eqref{ini}.
Then, for every $p\in [1,\infty]$, the nonlinear operator
\begin{equation}\label{nonlin}
\N
\left(
\begin{array}{c}
\tu\\
\tv
\end{array}
\right)
\equiv
\left(
\begin{array}{c}
f(U+\tu,V+\tv)-f(U,V)\\
g(U+\tu,V+\tv)-g(U,V)
\end{array}
\right)
-
\left(
\begin{array}{cc}
f_u(U, V)&f_v(U, V)\\
g_u(U, V)&g_v(U, V)
\end{array}
\right)
\left(
\begin{array}{c}
\tu\\
\tv
\end{array}
\right)
\end{equation}
satisfies 
\begin{equation}\label{N1}
\left\|\N(\tu,\tv)\right\|_{L^p\times L^p}
\leq  C\big(\rho, \|U\|_{L^\infty}, \|V\|_{L^\infty}\big)\|(\tu,\tv)\|_{L^\infty\times L^\infty}
\|(\tu,\tv)\|_{L^p\times L^p}
\end{equation}
for all $\tu,\tv \in L^\infty$ such that $\|\tu\|_{L^\infty}<\rho$ and 
$\|\tv\|_{L^\infty}<\rho$, where $\rho>0$ is an arbitrary constant.
If, moreover, $U,W\in W^{1,p}(\Omega)$ then 
\begin{equation}\label{N2}
\begin{split}
\big\|\nabla &\N(\tu,\tv)\big\|_{L^p\times L^p}\\
&\leq  C\big(\rho, \|U\|_{L^\infty}, \|V\|_{L^\infty},  \|\nabla U\|_{L^p}, 
\|\nabla V\|_{L^p}\big) 
\|(\tu,\tv)\|_{L^\infty\times L^\infty}
\|(\nabla \tu,\nabla \tv)\|_{L^p\times L^p}
\end{split}
\end{equation}
for all $\tu,\tv \in L^\infty$ such that $\|\tu\|_{L^\infty}<\rho$ and 
$\|\tv\|_{L^\infty}<\rho$, where $\rho>0$ is an arbitrary constant.
\end{lemma}

\begin{proof}
The proofs of both inequalities consist in using 
the Taylor formula applied to the $C^3$-nonlinearities  
$f=f(u,v)$ and $g=g(u,v)$
  in problem \eqref{eq1}-\eqref{eq2}.
\end{proof}

\subsection{Continuous spectrum of the linear operator.}
Now, we are in a position to study the spectrum $\sigma(\L)$ of the linear operator 
$\L$, given by the formula \eqref{lin}
 when we linearize the reaction-diffusion-ODE problem \eqref{eq1}-\eqref{ini}
at a regular stationary solution.

\begin{theorem} \label{thm:spec:L}
Assume that  $(U(x),V(x))$ is a regular stationary solution of the  problem \eqref{eq1}-\eqref{ini} and
define the constants
\begin{equation}\label{l0}
 \lambda_0 = \inf_{x \in \overline{\Omega}} f_u \big(U(x), V(x)\big) 
\qquad 
\text{and}
\qquad 
\Lambda_0 = \sup_{x \in \overline{\Omega}} f_u \big(U(x), V(x)\big) > 0.
\end{equation}
Fix $p\in (1,\infty)$.
Let $\L$ be the  linear operator defined formally  by formula \eqref{lin}
with the domain $D(\L)=L^p(\Omega)\times W_N^{2,p}(\Omega)$.
Then 
$$[\lambda_0,\Lambda_0]\subset \sigma(\L).$$
\end{theorem}

\begin{proof}
 We show that  for each $\lambda\in  [\lambda_0,\Lambda_0]$ the operator 
$$\L-\lambda I: L^p(\Omega)\times W^{2,p}(\Omega)\to L^p(\Omega)\times L^p(\Omega)$$
defined by formula
$$
(\L-\lambda I)(\varphi,\psi) = \big( (f_u-\lambda)\varphi
 +f_v \psi,\  \Delta \psi +g_u\varphi+(g_v-\lambda)\psi\big),
$$
where $f_u=f_u\big(U(x),V(x)\big)$, {\it etc.},
cannot have a bounded inverse. Suppose, {\it a contrario}, that $(\L-\lambda I)^{-1}$ exists and is bounded. Then, for a constant $K= \| (\L-\lambda I)^{-1}\|$, we have
$$
\|( \varphi,\psi)\|_{L^p(\Omega)\times W^{2,p}(\Omega)}\leq
K \|(\L-\lambda I)( \varphi,\psi)\|_{L^p(\Omega)\times L^p(\Omega)}
$$
for all $( \varphi,\psi)\in {L^p(\Omega)\times W^{2,p}(\Omega)}$ or, equivalently,
using the usual norms in $L^p(\Omega)\times W^{2,p}(\Omega)$ and in $L^p(\Omega)\times L^{p}(\Omega)$:
\begin{equation}\label{i:instab}
\begin{split}
\|\varphi\|_{L^p(\Omega)} &+\|\psi\|_{W^{2,p}(\Omega)}\\
&\leq K\big(\|(f_u-\lambda)\varphi +f_v\psi\|_{L^p(\Omega)}+
\|\Delta \psi +g_u\varphi+(g_v-\lambda)\psi\|_{L^p(\Omega)}
\big).
\end{split}
\end{equation}
A contradiction will be obtained by showing that inequality \eqref{i:instab} cannot be true for all 
$( \varphi,\psi)\in {L^p(\Omega)\times W^{2,p}(\Omega)}$.

To prove this claim, first we observe that,  for each  $\lambda\in [\lambda_0,\Lambda_0]$, there exists $x_0\in \overline \Omega$ such that 
$f_u\big(U(x_0),V(x_0)\big)-\lambda=0$. Hence, for every $\varepsilon>0$ there is a ball $B_\varepsilon \subset \Omega$ such that $\|f_u-\lambda\|_{L^\infty(B_\varepsilon)}\leq \varepsilon.$

Next, for arbitrary $\psi \in C^\infty_c(\Omega)$ satisfying
 ${\rm supp\,}\psi\subset B_\varepsilon$, we  choose $\varphi\in L^p(\Omega)$ such that ${\rm supp\,}\varphi \subset B_\varepsilon$
and  in such a way that
$\Delta \psi +g_u\varphi+(g_v-\lambda)\psi= \zeta$, where the function $\zeta\in L^p(\Omega)$ satisfies 
$\|\zeta\|_{L^p(\Omega)}\leq \varepsilon \|\varphi \|_{L^p(\Omega)}$. 
Let us explain that such a choice of $\varphi,\zeta\in L^p(\Omega)$ is always possible. We cut $g_u$ at the 
level $\varepsilon$ in the following way
\begin{equation*}
g_u^\varepsilon = g_u^\varepsilon \big(U(x),V(x)\big) \equiv
\left\{
\begin{array}{ccc}
g_u \big(U(x),V(x)\big) &\text{if}& |g_u \big(U(x),V(x)\big)|>\varepsilon,\\
\varepsilon  &\text{if}& |g_u \big(U(x),V(x)\big)|\leq \varepsilon.
\end{array}
\right.
\end{equation*}
Thus, we obtain 
$$
\Delta \psi +g_u\varphi+(g_v-\lambda)\psi= \Delta \psi +g_u^\varepsilon\varphi+(g_v-\lambda)\psi
+ (g_u-g_u^\varepsilon)\varphi=\zeta$$ 
for
$$
\varphi 
= \frac{-\big(\Delta \psi +(g_v-\lambda)\psi\big)}{g_u^\varepsilon}\in L^p(\Omega)
\qquad\text{and}\qquad
\zeta=(g_u-g_u^\varepsilon)\varphi \in L^p(\Omega)
$$
with $\|g_u-g_u^\varepsilon\|_{L^\infty(\Omega)}\leq \varepsilon.$

Now, noting  that ${\rm supp\,}\varphi \subset B_\varepsilon$, we obtain the inequality
$$\|(f_u-\lambda) \varphi\|_{L^p(\Omega)}\leq 
\|(f_u-\lambda)\|_{L^\infty(B_\varepsilon)}\| \varphi\|_{L^p(\Omega)}\leq \varepsilon \| \varphi\|_{L^p(\Omega)}.
$$
Thus, substituting functions $\varphi$, $\psi$, and $\zeta$ into inequality \eqref{i:instab}, we obtain the estimate
\begin{equation}\label{ii:instab}
\begin{split}
\|\varphi\|_{L^p(\Omega)} &+\|\psi\|_{W^{2,p}(\Omega)}\\
 &\leq K\big(\|(f_u-\lambda)\|_{L^\infty(B_\varepsilon)} \|\varphi\|_{L^p(\Omega)} +
\|f_v \psi\|_{L^p(\Omega)}+\|\zeta\|_{L^p(\Omega)}\big)\\
&\leq K\big(2\varepsilon  \|\varphi\|_{L^p(\Omega)} +\|f_v\|_{L^\infty(\Omega)}\|\psi\|_{L^p(\Omega)}\big).
\end{split}
\end{equation}
Hence, choosing $\varepsilon>0$ 
in such a way that $2K\varepsilon\leq 1$
 and compensating the term $2K\varepsilon  \|\varphi\|_{L^p(\Omega)}$ on the right-hand side of  inequality \eqref{ii:instab}  by its counterpart on the left-hand side, we obtain the estimates
$$\|\psi\|_{W^{2,p}(\Omega)}\leq (1-2K\varepsilon)|\varphi\|_{L^p(\Omega)}+
\|\psi\|_{W^{2,p}(\Omega)}
\leq K\|f_v\|_{L^\infty(\Omega)}\|\psi\|_{L^p(\Omega)},
$$
which, obviously, cannot be true for all $\psi \in C^\infty_c(\Omega)$ such that ${\rm supp\,}\psi\subset B_\varepsilon$. 

We have completed the proof that each $\lambda\in [\lambda_0,\Lambda_0]$ belongs to $\sigma(\L)$.
\end{proof}

\subsection{Eigenvalues.}\label{sub:eigen}
In the Hilbert case  $D(\L)=L^2(\Omega)\times W_N^{2,2}(\Omega)$,
the remainder of the spectrum of $\big(\L, D(\L)\big)$ consists of 
a discrete  set of eigenvalues $\{\lambda_n\}_{n=1}^\infty \subset \C \setminus [\lambda_0,\Lambda_0]$. Here, we sketch the proof of this result, however, it does not play
any role in our instability results.

As the usual practice, 
we analyze  the corresponding resolvent equations 
\begin{align}
(f_u-\lambda)\varphi +f_v\psi&=F\qquad \text{in}\quad  \overline\Omega\label{res:lin:a}\\
\Delta \psi +g_u\varphi+(g_v-\lambda)\psi&=G \qquad \text{in}\quad  \Omega\label{res:lin:b}\\
\partial_\nu \psi&=0\qquad \text{on}\quad  \partial\Omega,\label{res:lin:c}
\end{align}
with arbitrary $F,G\in L^2(\Omega)$. 
Here, one should notice that  for every  $\lambda \in \C\setminus   [\lambda_0,\Lambda_0]$, 
one can solve equation \eqref{res:lin:a} with respect to $\varphi$.
Thus,  
after substituting the resulting expression $\varphi=(F-f_v\psi)/(f_u-\lambda)\in L^2(\Omega)$  
into \eqref{res:lin:b}, we obtain 
the boundary value problem
\begin{align}
&\Delta \psi + q ( \lambda) \psi = p(\lambda)&     \text{for}\quad &x\in {\Omega}, & \label{psi1r}\\
&\partial_\nu \psi = 0&  \text{for}\quad &x\in \partial\Omega,&\label{psi2r}
\end{align}
where 
\begin{equation}\label{pq}
 q(\lambda)=q(x, \lambda) = -\frac{g_u f_v}{f_u - \lambda} + g_v - \lambda
\qquad\text{and}\qquad
p(\lambda)=p(x, \lambda)= G- \frac{g_u F}{f_u - \lambda}.
\end{equation}
For a fixed $\lambda\in \C\setminus [\lambda_0,\Lambda_0]$,
by the Fredholm alternative, either the inhomogeneous  problem \eqref{psi1r}-\eqref{psi2r} has a unique solution 
(so, $\lambda$ is not an element of $\sigma(\L)$)
or 
else the homogeneous   boundary value problem
\begin{align}
&\Delta \psi + q ( \lambda) \psi = 0&     \text{for}\quad &x\in{\Omega}, & \label{psi1}\\
&\partial_\nu \psi = 0&  \text{for}\quad &x\in \partial\Omega,&\label{psi2}
\end{align}
has a nontrivial solution $\psi$. Hence, it suffices to consider those $\lambda \in \C\setminus  [\lambda_0,\Lambda_0]$,
for which problem \eqref{psi1}-\eqref{psi2} has nontrivial solution. 

Now, we are in a position to 
prove that the set $\sigma(\L)\setminus [\lambda_0,\Lambda_0]$ consists of isolated eigenvalues of $\L$, only.
Here, it suffices to  use 
the following general result on a family of compact operators, which we state
for the reader's convenience. The proof
can be  found in the Reed and Simon book 
\cite[Thm. VI.14]{reed-simon-I}.	

\begin{theorem}[Analytic Fredholm theorem]\label{Fredholm}
Assume that $H$ is a Hilbert space and denote by $L(H)$ the Banach space of all
bounded linear operators acting on  $H$. 
For an open connected set $D\subset \C$, let $f:D\to L(H)$ be an analytic operator-valued function such that $f(z)$ is compact for each $z\in D$. Then, either\\
(a) $(I-f(z))^{-1}$ exists for no $z\in D$, or\\
(b) $(I-f(z))^{-1}$ exists for all $z\in D\setminus S$, where $S$ is a discrete subset of $D$ {\rm (}{\it i.e.} a set which has no limit points in $D${\rm )}.
\end{theorem}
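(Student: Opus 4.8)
The plan is to prove this by a local reduction to finite rank followed by a global connectedness argument; the statement is classical, and the route below is essentially the one in \cite{reed-simon-I}. First I would localize. Fix $z_0\in D$ and, using norm-continuity of the analytic map $f$, choose $r>0$ so small that $\|f(z)-f(z_0)\|<1/2$ for all $z$ in the disk $\Delta=\{|z-z_0|<r\}$. Since $f(z_0)$ is compact, I would pick a finite-rank operator $F$ with $\|f(z_0)-F\|<1/2$, so that $\|f(z)-F\|<1$ on $\Delta$ and hence $I-(f(z)-F)$ is invertible there by the Neumann series, with inverse depending analytically on $z$.

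Next I would factor $I-f(z)=\big(I-(f(z)-F)\big)\big(I-g(z)\big)$, where $g(z)=\big(I-(f(z)-F)\big)^{-1}F$ is a finite-rank, analytic operator-valued function on $\Delta$. Writing $F=\sum_{j=1}^N\langle\,\cdot\,,\phi_j\rangle\psi_j$ and $\eta_j(z)=\big(I-(f(z)-F)\big)^{-1}\psi_j$, I would reduce the equation $(I-g(z))u=v$ to the finite linear system $(I-M(z))c=b$, where $M_{ij}(z)=\langle\eta_j(z),\phi_i\rangle$, $c_i=\langle u,\phi_i\rangle$ and $b_i=\langle v,\phi_i\rangle$. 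Setting $d(z)=\det\big(I-M(z)\big)$, which is analytic on $\Delta$ because each $\eta_j$ is, I would conclude that $I-g(z)$, and hence $I-f(z)$, is invertible precisely when $d(z)\neq0$. By the identity theorem for scalar analytic functions, on each such disk $d$ either vanishes identically or has only isolated zeros.

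Finally, I would globalize. Let $S=\{z\in D:(I-f(z))^{-1}\text{ does not exist}\}$ and let $A=\mathrm{int}\,S$, which is open. The key step is to show that $A$ is also closed in $D$: if $z_n\in A$ with $z_n\to z_*\in D$, then on a local disk $\Delta_*$ around $z_*$ the associated determinant $d_*$ vanishes on the nonempty open set $A\cap\Delta_*$, so $d_*\equiv0$ on $\Delta_*$ by the identity theorem, whence $\Delta_*\subset S$ and $z_*\in A$. Since $D$ is connected, either $A=\emptyset$ or $A=D$. If $A=D$ then $S=D$, giving alternative (a). If $A=\emptyset$ then $S$ has empty interior, so near every point the local dichotomy forces $d\not\equiv0$; then $S$ meets each disk in isolated zeros only, i.e.\ $S$ is discrete with no limit point in $D$, giving alternative (b).

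I expect the main obstacle to be the globalization rather than the local finite-rank computation. The delicate point is that the local determinant $d$ depends on the auxiliary data $(z_0,F)$, so its identical vanishing must be converted into the intrinsic statement that $I-f$ is non-invertible on an open set, and then propagated across $D$. Showing that $\mathrm{int}\,S$ is simultaneously open and closed, via the identity theorem, is exactly the device that turns the purely local alternative into the claimed global dichotomy.
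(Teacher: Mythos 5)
Your proof is correct, and it coincides with the paper's own treatment of this statement: the paper does not prove Theorem \ref{Fredholm} at all but cites \cite[Thm.~VI.14]{reed-simon-I}, and your argument---local finite-rank reduction $I-f(z)=\bigl(I-(f(z)-F)\bigr)\bigl(I-g(z)\bigr)$, the determinant criterion $d(z)\neq 0$, and the globalization by showing $\mathrm{int}\,S$ is both open and closed in the connected set $D$---is exactly the classical Reed--Simon proof. Your identification of the delicate point is also on target, and it is handled correctly: since $d(z)=0$ holds if and only if $I-f(z)$ is non-invertible (note that if $(I-M(z))c=0$ with $c\neq 0$, then $u=\sum_j c_j\eta_j(z)$ satisfies $\langle u,\phi_i\rangle=c_i$, so $u\neq 0$ and $(I-g(z))u=0$ without any linear-independence assumption on the $\eta_j$), the zero set of each local determinant is intrinsically $S\cap\Delta$, which is what licenses the patching across overlapping disks.
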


Let us rewrite problem \eqref{psi1}-\eqref{psi2} in the form
$$
\psi=  G\big[-(q(\lambda)+\ell)\psi\big] \equiv R(\lambda) \psi,
$$
where the operator $G=``{(\Delta-\ell I)^{-1}}$'' supplemented with the Neumann boundary conditions 
is defined in the usual way. Here, $\ell\in \R$ is a fixed number different from each eigenvalue of Laplacian with the Neumann boundary condition.

 Recall that, for each  $\lambda \in \C\setminus  [\lambda_0,\Lambda_0]$,
  the operator $R(\lambda):L^2(\Omega)\to L^2(\Omega)$ 
is compact as the superposition of the compact operator $G$ and of the continuous multiplication operator with the function $q(\lambda)+\ell \in L^\infty(\Omega)$. 
Moreover, the mapping $\lambda \mapsto R(\lambda)$ from the open set  $\C\setminus  [\lambda_0,\Lambda_0]$ into the Banach space of linear compact operators is analytic, which can be easily seen using the explicit form of $q(\lambda)$ in \eqref{pq}. 
Thus,  the set $\sigma(\L)\setminus [\lambda_0,\Lambda_0]$ consists of isolated points
due to  the analytic Fredholm Theorem \ref{Fredholm}.
Here, to exclude the case (a) in Theorem \ref{Fredholm}, we have to show that the operator 
$I-R(\lambda)$ is invertible for some $\lambda \in \C\setminus  [\lambda_0,\Lambda_0]$.
This is, however, the consequence of the fact that the inhomogeneous boundary value problem 
\eqref{psi1r}-\eqref{psi2r} has a unique solution if $\lambda >0$ is chosen so large that $q(x,\lambda)<0$.


\subsection{Linearization principle}
The next goal in this section is to recall that, under appropriate conditions, the linear 
instability of the stationary solutions of a reaction-diffusion-ODE problem implies their nonlinear
instability. 
Such a theorem is well-known for ordinary differential equations.
Furthermore,  in the case of reaction-diffusion equations where the spectrum of a linearized problem is discrete, one my  apply 
the abstract result from the book by Henry \cite[Thm. 5.1.3]{Henry}.
However, in the case of reaction-diffusion-ODE problems, 
the linearized operator at a stationary solution 
(either smooth or discontinuous) 
may have a non-empty  continuous spectrum ({\it cf.} Theorem \ref{thm:spec:L}).
Hence, checking the assumptions of general results
from \cite{Henry} does not seem to be straightforward.
Therefore, here, we propose a different approach.

Let us consider a general evolution equation
\begin{equation}\label{eq:a}
w_t=\L w +\N(w), \qquad w(0)=w_0
\end{equation}
where $\L$ is a generator of a $C_0$-semigroup of 
linear operators  $\{e^{t\L}\}_{t\geq 0}$  on a Banach space $X$
and $\N$ is  a nonlinear operator such that $\N(0)=0$.

First, we recall an idea  introduced by Shatah and 
Strauss \cite{SS00} which asserts that, under relatively strong assumption on a nonlinearity 
in  equation \eqref{eq:a}, 
the existence of a positive part of  the spectrum of the linear operator $\L$ is sufficient to show  that the zero solution of equation \eqref{perturb}
 is unstable. This is  the precise statement of that result.

\begin{theorem}[{\cite[Thm 1]{SS00}}]\label{thm:SS}
Consider an abstract problem \eqref{eq:a}, where
\begin{enumerate}
\item the linear operator $\L$ generates a strongly continuous semigroup of linear operators on a Banach space $X$,
\item the intersection of the spectrum of $\L$ with the right half-plane $\{\lambda\in \C:\;: \Re \lambda>0\}$ is nonempty.   
\item $\N :X\to X$ is continuous and there exist constants $\rho>0$, $\eta>0$, and $C>0$ 
such that $\|\N (w)\|_X \leq C\|w\|_X^{1+\eta}$ for all $\|w\|_X<\rho$.
\end{enumerate} 
Then the zero solution of this equation is (nonlinearly) unstable.
\end{theorem}

We apply Theorem \ref{thm:SS} to show an instability of 
regular steady states.
In the case of discontinuous stationary solutions, 
we are unable  to show that the nonlinearity 
in equation \eqref{perturb}
satisfies
 the the condition (3) of Theorem \ref{thm:SS}.
One may overcome this obstacle by assuming that the 
  the spectrum  $\sigma(\L)$ has so-called spectral gap. 
This classical method has been recently used by Mulone and Solonnikov \cite{MS09} 
to show the instability of regular stationary solutions to certain reaction-diffusion-ODE problems, however, assumptions imposed in \cite{MS09} are not satisfied in our case.

The crucial idea underlying this approach  is to use two Banach
spaces: a ``large'' space $Z$ where 
the spectrum of a linearized operator is studied
 and a ``small'' space  $X\subset Z$ where an existence of solutions 
 can be proved. 
More precisely,
let $(X,Z)$ be a pair of Banach spaces such that
$X\subset Z$ with a dense and continuous embedding.
A solution $w\equiv 0$ of the Cauchy problem
\eqref{eq:a}  is called {\it $(X,Z)$-nonlinearly stable} if for every $\varepsilon > 0$, 
there exists $\delta > 0$
so that if $w(0) \in X$  and $\|w(0)\|_Z < \delta$,   then
\begin{enumerate}
\item there exists a global in time solution of  \eqref{eq:a}
such that $w \in C([0,\infty);X)$;
\item $\|w(t)\|_Z < \varepsilon	$ for all $t \in [0,\infty)$.
\end{enumerate}
An equilibrium $w\equiv 0$ that is not stable (in the above sense) is called Lyapunov
unstable.

In this work, we drop the reference to the pair $(X,Z)$.
Let us also note that, under this definition of stability, a loss of the existence
of a solution  of  \eqref{eq:a}  is a particular case of instability.

Now, we recall a result linking the existence of the so-called {\it spectral gap} to the nonlinear instability of a trivial solution
to problem \eqref{eq:a}.

\begin{theorem}\label{thm:spec}
We impose the two following assumptions.
\begin{enumerate}
\item The semigroup of linear operators $\{e^{t\L}\}_{t\geq 0}$ on $Z$ 
satisfies ``the spectral gap condition'', namely, we suppose that for every $t>0$ the spectrum $\sigma$ 
of the linear operator $e^{t\L}$ can be decomposed as 
follows: $\sigma =\sigma(e^{t\L})= \sigma_{-}\cup\sigma_{+} $ with $\sigma_+\neq \emptyset$, where
$$
\sigma_{-}\subset \{z\in \C\;:\; e^{\kappa t}<|z|<e^{\mu t}\}
\quad\text{and}\quad
\sigma_{+}\subset \{z\in \C\;:\; e^{Mt}<|z|<e^{\Lambda t}\}
$$
and 
$$
-\infty\leq \kappa<\mu<M<\Lambda<\infty\qquad \text{for some} \quad M>0.
$$
\item The nonlinear term $\N$ satisfies the inequality
\begin{equation}\label{as:N}
\|\N(w)\|_Z\leq C_0 \|w\|_X\|w\|_Z \qquad \text{for all}\quad w\in X \quad \text{satisfying}
\quad \|w\|_X<\rho
\end{equation}
for some constants $C_0>0$ and $\rho>0$.
\end{enumerate}
Then, the trivial solution $w_0\equiv 0$ of the Cauchy problem \eqref{eq:a} is nonlinearly unstable.
\end{theorem}

The proof of this theorem can be found  in the work by Friedlander {\it et al.} 
\cite[Thm. 2.1]{FSV}.

\begin{rem}\label{rem:spec}
The operator $\L$ considered in this work satisfies the ``spectral mapping theorem":
$\sigma (e^{t\L})\setminus\{0\}=e^{t\sigma(\L)}$, see Lemma \ref{lem:lin}.
Thus, due to the relation $|e^z|=e^{\Re z}$ for every $z\in\C$, the spectral gap condition required in Theorem \ref{thm:spec} holds true if for every $\lambda\in \sigma(\L)$, either $\Re\lambda \in (\kappa,\mu)$ or $\Re\lambda \in (M,\Lambda)$.
\end{rem}

\begin{rem}
The authors of the reference \cite[Thm. 2.1]{FSV} formulated their instability result
under the spectral gap condition for a {\it group} of linear operators  
$\{e^{t\L}\}_{t\in\R}$ and in the case 
of a finite constant $\kappa$ (caution: in \cite{FSV},  the letter $\lambda$ is used 
instead of $\kappa$).
However, the proof of \cite[Thm. 2.1]{FSV} holds true (with a minor and obvious modification) in the case of a semigroup $\{e^{t\L}\}_{t\geq 0}$ as well as $\kappa=-\infty$ is allowed, as stated in Theorem \ref{thm:spec}.
This extension is important to deal with the operator $\L$ introduced in 
Lemma \ref{lem:lin}, which generates a semigroup of linear operators, only, 
and which may have an unbounded sequence of eigenvalues.
\end{rem}

\subsection{Proofs of instability results}\label{subsec:instab}

\begin{proof}[Proof of Theorem \ref{thm:non-const}]
Here, it suffices to apply  Theorem \ref{thm:SS}
to the semi-linear equation \eqref{perturb} with the Banach space 
$$
X=W^{1,p}(\Omega)\times W^{1,p}(\Omega) \qquad \text{for some}\quad 
p>n.
$$
Recall the well-known embedding $W^{1,p}(\Omega)\subset L^\infty(\Omega)$ for every $p\in (n,\infty]$..

We refer the reader to \cite[Ch.~2]{Y10} for the proof that the operator $\L$ discussed in Lemma~\ref{lem:lin} generates a semigroup of linear operators on $X$.
The autocatalysis condition \eqref{as:auto} combined with Theorem \ref{thm:spec:L}
imply that $\sigma(\L)$ meets the right-hand plane of $\C$.
Due to the embedding $X\subset L^\infty(\Omega)\times L^\infty(\Omega)$,
inequalities \eqref{N1} and \eqref{N2} imply that the nonlinear mapping 
$\N$ in \eqref{nonlin} satisfies the condition (3) of Theorem \ref{thm:SS} with $\eta=1$.

Hence, the regular stationary solution $(U,V)$ is unstable.
\end{proof}

\begin{proof}[Proof of Theorem  \ref{thm:weak}]
To show an instability of non-regular stationary solution, we begin 
as in the proof of Theorem \ref{thm:non-const}. First, we linearize our 
problem at a weak bounded stationary solution $(U,V)$ and 
we notice that assumptions of
Lemmas \ref{lem:lin} and \ref{lem:nonlin} are satisfied. 
Next, following the arguments from the proof of Theorem \ref{thm:spec:L}
we show that the number $f_u(U(x_0), V(x_0))$
belongs to $\sigma(\L)$, where $f_u(U(x),V(x))$ is positive at $x_0$ and continuous in its neighborhood. Notice that we do not need to show that all numbers from ${\rm Range}\, f_u(U,V)$ are in $\sigma(\L)$ to show the spectral gap condition required by Theorem 
\ref{thm:spec}. 
The reasoning from Subsection \ref{sub:eigen} concerning eigenvalues 
can be copied here without any change because $q(\lambda,x)$ defined in \eqref{pq}
is a bounded function for every $\lambda\in \C\setminus [\lambda_0,\Lambda_0]$.

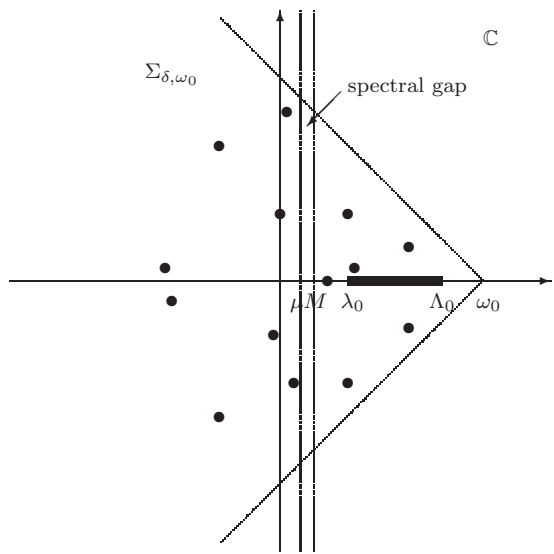
\begin{figure}  \label{fig1}
  \setlength{\unitlength}{0.9mm}

{\tiny

\begin{picture}(100,100)

\put(50,50){\vector(0, 1){40}}
\put(50,50){\vector(1, 0){40}}
\put(50,50){\line(-1, 0){40}}
\put(50,50){\line(0, -1){40}}

\put(80,85){$\mathbb{C}$}
\put(30,80){${\Sigma_{\delta,\omega_0}}$}

\multiput(80,50)(-0.3,0.3){130}{\line(0,1){0.2}}
 \multiput(80,50)(-0.3,-0.3){130}{\line(0,1){0.2}}

\multiput(53,10)(0,0.3){267}{\line(0,1){0.1}}

\multiput(55,10)(0,0.3){267}{\line(0,1){0.1}}

\put(59,46){$\lambda_0$}
\put(72,46){$\Lambda_0$}
\put(79,46){$\omega_0$}

\put(51.5,46){$\mu$}
\put(53.5,46){$M$}

\put(60,78){spectral gap}
\put(59,78){\vector(-1,-1){5}}

\linethickness{1.2mm}
\put(60,50){\line(1, 0){14}}
\put(69,55){\circle*{1.5}}
\put(57,50){\circle*{1.5}}
\put(61,52){\circle*{1.5}}
\put(50,60){\circle*{1.5}}
\put(41,70){\circle*{1.5}}
\put(33,52){\circle*{1.5}}
\put(51,75){\circle*{1.5}}
\put(60,60){\circle*{1.5}}

\put(69,43){\circle*{1.5}}
\put(49,42){\circle*{1.5}}
\put(41,30){\circle*{1.5}}
\put(34,47){\circle*{1.5}}
\put(52,35){\circle*{1.5}}
\put(60,35){\circle*{1.5}}
  \end{picture}
  }
  \caption{The spectrum $\sigma(\mathcal{L})$ is marked by thick dots and by the interval $[\lambda_0,\Lambda_0]$ in the sector $\Sigma_{\delta,\omega_0}$.
The spectral gap is represented by the strip
$\{\lambda\in\mathbb{C}\,:\, \mu\leq {\rm Re}\, \lambda \leq M\}$ without elements of $\sigma(\mathcal{L})$.
}
\end{figure}

Now, let us show that the operator $\L$ has a spectral gap as required in assumption (1) of
Theorem~\ref{thm:spec}.

By Lemma \ref{lem:lin}, there exists a number $\omega_0\geq 0$ such that the operator 
$\big(\L-\omega_0 I, D(\L)\big)$ generates a bounded analytic semigroup on 
$L^2(\Omega)\times L^2(\Omega)$; hence, this is a sectorial operator, see 
\cite[Ch.~II, Thm.~4.6]{EN}. In particular, there exists $\delta \in (0,\pi/2]$ such that 
$
\sigma(\L)\subset \Sigma_{\delta,\omega_0}\equiv 
\{\lambda\in\C\;:\; |{\rm arg}\; (\lambda-\omega_0)|\geq \pi/2+\delta\},
$
see Fig. \ref{fig1}.
A part of the spectrum 
$\sigma(\L)$ in the triangle $\Sigma_{\delta,\omega_0}\cap \{\lambda\in \C\;:\: {\rm Re}\; \lambda>0\}$
consists  of   the interval $[\lambda_0,\Lambda_0]$ where $\lambda_0>0$
and of a discrete  set
of  eigenvalues  
(by discussion in Subsection~\ref{sub:eigen})
with accumulation points from the interval 
$[\lambda_0,\Lambda_0]$, only (by Theorem \ref{Fredholm}.b).
Thus, we can easily find infinitely many $0\leq \mu<M\leq \lambda_0,$
for which the spectrum $\sigma(\L)$ can be decomposed as 
 required in Theorem~\ref{thm:spec}.  Here, one should use  the spectral mapping theorem, {\it i.e.} equality  \eqref{spec}, and 
 Remark \ref{rem:spec}.

Now, to complete the proof of an instability of not-necessarily regular stationary solutions,
we apply Theorem \ref{thm:spec} with
$X=L^\infty(\Omega)\times L^\infty(\Omega)$ and  $Z=L^2(\Omega)\times L^2(\Omega)$
for a bounded domain $\Omega\subset \R^N$ with a regular boundary, supplemented with the usual norms.
Then, required estimate of the nonlinear mapping in \eqref{as:N}
is stated in inequality \eqref{N1} with $p=2$.
\end{proof}


\begin{proof}[Proof of Corollary \ref{cor:weak}]
Here, the analysis is similar  to the case of regular stationary solutions discussed 
in Theorem \ref{thm:non-const}, hence, we only emphasize the most important steps.

Let $(U,V)$ be a weak solution of problem \eqref{seq1}-\eqref{sNeumann} and denote by 
 $\I\subset \overline\Omega$ its {\it null} set, namely, a measurable set such that $U(x)=0$ for all $x\in \I$ 
and $U(x)\neq 0$ for all $x\in \overline\Omega \setminus \I$.
For a null set $\I$, we define the associate $L^2$-space 
$$
L^2_\I(\Omega)=\{ v\in L^2(\Omega )\,:\, v(x)=0\quad \text{for}\quad x\in\I\}, 
$$
supplemented with the usual $L^2$-scalar product, which  is a Hilbert space as the closed subspace of $L^2(\Omega)$.
In the same way, we define the  subspace $L^\infty_\I(\Omega)\subset L^\infty(\Omega)$ by the equality 
$L^\infty_\I(\Omega)=\{ v\in L^\infty(\Omega )\,:\, v(x)=0\; \text{for}\; x\in\I\}$.

Obviously,  when the measure of $\I$ equals zero, we have $L_\I^2(\Omega)=L^2(\Omega)$.
The imposed assumptions  imply that 
$\I$  is different from the whole interval.

Now, observe that if $u_0(x)=0$ for some $x\in \Omega$ then by equations \eqref{eq1}
with the nonlinearity $f(u,v)=r(u,v)u$
 we have $u(x,t)=0$ for all $t\geq 0$. 
 Hence,   the spaces 
\begin{equation}\label{HI}
X_\I=L^\infty_\I(\Omega)\times  L^\infty(\Omega) \qquad \text{and}\qquad Z_\I=L^2_\I(\Omega)\times  L^2(\Omega)
\end{equation}
are  invariant 
 for the flow generated by problem \eqref{eq1}-\eqref{ini} (notice that there is no ``$\I$'' in the second coordinates of $X_\I$ and $Z_\I$). 
 The crucial part of our analysis is based on the fact that, as long as we work in the space $X_\I$ and $Z_\I$,
 we can linearize problem \eqref{eq1}-\eqref{ini} at the weak solution  $(U, V)$. 
Moreover, for each $x\in \overline\Omega \setminus\I$, 
the corresponding linearized operator agrees with $\L$ defined in Lemma \ref{lem:lin}.
 Hence, the analysis from the proof of Theorem~\ref{thm:non-const} can be directly adapted to discontinuous steady states in the following way.

We fix  a weak stationary solution $(U_\I, V_\I)$    
with a null set $\I\subset \Omega$.
The Fr\'echet  derivative of the nonlinear mapping $\F: Z_\I\to Z_\I$  defined by the mappings $(U,V)\mapsto (f(U,V), g(U,V))$ 
at the point $(U_\I, V_\I)\in Z_\I$
has the form
$$
D\F(U_\I, V_\I)
\left(
\begin{array}{c}
 \varphi    \\
\psi        
\end{array}
\right)
=\A_\I(x)
\left(
\begin{array}{c}
 \varphi    \\
\psi        
\end{array}
\right),
$$
where
$$   
\A_\I(x) =
\left(
\begin{array}{cc}
f_u(U_\I(x), V_\I(x))&f_v(U_\I(x), V_\I(x))\\
g_u(U_\I(x), V_\I(x))&g_v(U_\I(x), V_\I(x))
\end{array}
\right)
$$
This results immediately from the definition of the Fr\'echet  derivative.

 Next, we study spectral properties  of 
the linear operator 
$$
\L_\I
\left(
\begin{array}{c}
 \varphi    \\
\psi       
\end{array}
\right)
=
\left(
\begin{array}{cc}
       0       &  0\\
    0     &   \Delta \psi
\end{array}
\right)+\A_\I(x)
\left(
\begin{array}{c}
 \varphi    \\
\psi         
\end{array}
\right),
$$
 in the Hilbert space $Z_\I$  (see \eqref{HI}) with the domain 
$ 
D(\L_\I)= L_\I^2(\Omega)\times W^{2,2}(\Omega).
$
Here, the reasoning from the proof of Theorem \ref{thm:non-const} can be directly adapted with  modifications as in the proof of Theorem  \ref{thm:weak}.

Finally, we may study the discrete spectrum of $\L_\I$ in the same way as in 
Subsection~\ref{sub:eigen} because the corresponding function $q(\lambda,x)$
is bounded for $\lambda \in \C\setminus [\lambda_0,\Lambda_0]$.
The proof of  instability of the stationary solution $(U_\I, V_\I)$ is completed by Theorem
\ref{thm:spec} and Lemmas~\ref{lem:lin}-\ref{lem:nonlin}. 
\end{proof}

\section{Constant steady states which are intersected by  non-constant stationary solutions}\label{sec:const:non}
First, we prove  a certain property of  stationary solutions to a general
elliptic Neumann problem. This result will imply immediately Proposition \ref{thm:ineq}.

\begin{theorem}\label{lem:N}
Assume that 
$V\in C^2(\Omega)\cap C^1(\bOmega)$ is a non-constant solution of the 
boundary value problem
\begin{eqnarray}
\Delta V+h(V)=0 \quad \text{for}\ x \in \Omega\quad \text{and} \quad \partial_\nu V=0
 \quad \text{for}\ x \in \partial\Omega. \label{lem4.3-eq1}
\end{eqnarray}
Then, there exists $x_0\in \bOmega$ and $a_0\in \R$ such that
\begin{equation}
V(x_0)=a_0, \quad h(a_0)=0 \quad\text{and}\quad  h'(a_0)\geq 0. \label{lem4.3-eq10}
\end{equation}
\end{theorem}

\begin{proof}
First, as in the proof of Proposition \ref{prop1}, 
we integrate the equation in \eqref{lem4.3-eq1} and we use the Neumann
 boundary condition to obtain
$
 \int_\Omega h(V(x))\, dx = 0.
$
Hence,  there exists $x_0 \in \overline{\Omega}$ and $a_0\in \R$ such that
$
V(x_0)=a_0$
 and
$
h(a_0)=0.
$
Now, we suppose that 
 $h^\prime (a_0) < 0$, and consider two cases: $x_0 \in \Omega$ and $x_0
 \in \partial \Omega$, separately. %

Let $x_0 \in \Omega$. Since $h(a_0) = 0$, we have
\[
 \Delta (V - a_0) + h(V) - h(a_0) = 0.
\]
Using the well-known formula
\begin{align*}
 h(V) - h(a_0) &= \int_0^1 \frac{d}{ds}h\big(s V + (1-s)a_0\big)\, ds \\
&= (V-a_0)\int_0^1 h^\prime \big(sV + (1-s)a_0\big)\, ds,
\end{align*}
we obtain
\begin{align}
\Delta (V - a_0) + r(x, a_0) (V - a_0) = 0, \label{lem4.3-eq2}
\end{align}
where $r(x, a_0) = \int_0^1 h^\prime \big(sV(x) + (1-s)a_0\big)\, ds$. %
Observe that $r(\cdot, a_0) \in C(\Omega)$ and 
\begin{align*}
r(x_0, a_0) &= \int_0^1 h^\prime (sV(x_0) + (1-s)a_0)\, ds \\
&= \int_0^1 h^\prime (s a_0 + (1-s)a_0)\, ds \\
&= h^\prime (a_0) < 0. 
\end{align*}
Hence, there exists an open neighbourhood $\mathcal{U} \subset \Omega$ of $x_0$
 such that %
$r (x, a_0) < 0$ for all $x \in \mathcal{U}$. %
Suppose that $r (x, a_0) < 0$ for all $x \in \Omega$. %
Multiplying both sides of equation \eqref{lem4.3-eq2} by $V(x)-a_0$ and
 integrating over $\Omega$, we obtain
\[
- \int_\Omega |\nabla (V(x)-a_0)|^2\, dx + \int_\Omega r(x, a_0) (V(x) -
 a_0)^2\, dx = 0.
\]
This implies that $V(x) \equiv a_0$, which is a contradiction, because we assume that $V=V(x)$ is a non-constant solution. %
Therefore, there exists $x_1 \in \partial \mathcal{U} \cap \Omega$ such that $r(x_1, a_0) = 0$. %
It follows from equation \eqref{lem4.3-eq2} that $\Delta V(x_1) = 0$,
 and consequently, from equation \eqref{lem4.3-eq1} we have $h(V(x_1)) = 0$. %
Hence, there exists $a_1 \in \R$ such that
 $V(x_1) = a_1$ and $h(a_1) = 0$. %
Note that $a_1 \neq a_0$. %
Thus, if the equation $h(V)=0$ has only one root, the proof of Theorem
 \ref{lem:N} is completed. 

Now, we consider two cases.

\noindent
{\it Case I:\ The equation $h(V) = 0$ has no solution between $a_0$ and
 $a_1$.} %
Thus, we define the function
\[
 \psi(s) = V(x_0 + s (x_1 - x_0))\qquad  \text{for}\  0 \le s
 \le 1,
\] %
and, without loss of generality, we can assume that $a_0 < \psi(s) < a_1$ for all $0 <
 s < 1$. %

Since $h(a_0) = 0$ and $h^\prime (a_0) < 0$, we have $h(a_0 + \theta_0) <
 0$ for small $\theta_0 > 0$. If we also suppose that $h^\prime (a_1) <
 0$, then, we can find small $\theta_1 > 0$ such that $h(a_1 -
 \theta_1) > 0$. %
Noting that $\psi (s)$ is continuous function, we see that there exist
 $s_\ast,\ s_{\ast \ast} \in (0, 1)$ such that $\psi (s_\ast) = a_0 + \theta_0$ and
 $\psi (s_{\ast \ast}) = a_1 - \theta_1$. %
This implies that there exists $ \hat{s} \in (s_\ast, s_{\ast \ast})$
 for which $h(V(x_0 + \hat{s} (x_1 - x_0))) = 0$, %
and from the assumption for $\psi(s)$, 
\[
a_0 < V(x_0 + \hat{s} (x_1 - x_0)) < a_1.
\]
This is a contradiction with the hypothesis that the equation $h(V) = 0$
 has no roots between $a_0$ and $a_1$. %
Hence, $h^\prime (a_1) \ge 0$. %

\noindent
{\it Case II:\ The equation $h(V) = 0$ has a solution $a_m$ between $a_0$ and
 $a_1$.} %
It is clear that $V(x)$ has to intersect $a_m$, too. %
Choosing $a_m$ the closest root of $h(V) = 0$ to $a_0$, we repeat the argument from Case I
 to show that $h^\prime (a_m) \ge 0$. %

Next, let $x_0 \in \partial \Omega$. %
Following the previous reasoning and using the hypothesis 
$h'(a_0)<0$,
we find a ball $B \subseteq \Omega$ such that $x_0 \in \partial
 \Omega$ and $r(x, a_0) < 0$ for all $x \in B$. %
Moreover, we can assume that either
$V(x) > a_0$ or $V(x) < a_0$ for all $x \in
 B$, because, if there exists $x_1 \in B$ such that 
$V(x_1) = a_0$, then we
 can apply the same argument as in the first part of this proof
 to obtain 
$h^\prime (a_0)\geq 0$. %

Let $V(x) < a_0$ for all $x \in B$, and we apply the Hopf boundary lemma to equation \eqref{lem4.3-eq2} in
 the ball $B$. %
If $V$ is a non-constant solution satisfying $V(x) - a_0 < 0$ and
 $V(x_0) - a_0 = 0$, then necessarily
$
 {\partial V(x_0)}/{\partial \nu} > 0,
$
which contradicts the Neumann boundary condition satisfied by $V$ at
 $x_0 \in \partial \Omega$. %

Now, we consider the case $V(x) > a_0$ for all $x \in B$. %
Here, the function $U (x) = -(V(x) - a_0)$ satisfies the equation
\begin{align*}
-\Delta U + (- r(x, a_0))U = 0 \qquad \text{in}\ B 
\end{align*}
where  $r(x, a_0) < 0$, $U(x) < 0$ for all $x\in B$
and $U(x_0)= 0$. Hence,  the
 Hopf boundary lemma  yields a contradiction. %

Thus, $h'(a_0)\geq 0$ and the proof is complete.
\end{proof}


\begin{proof}[Proof of Proposition \ref{thm:ineq}.]
Let $\big(U(x), V(x)\big)$ be
a non-constant regular stationary solution  of \eqref{eq1}--\eqref{Neumann}
which means that  $U = k(V)$ and $f\big(k(V), V \big) =0$.
Since each constant solution  is non-degenerate, the equation 
$f(U, V) =0$ can be solved locally with respect to $U$, which implies that $k=k(V)$ is a $C^1$-function. 
Substituting $U = k(V)$ into equation \eqref{seq2} and %
denoting $h(V) = g\big(k(V), V \big)$, we obtain the following boundary value problem
satisfied by $V=V(x)$:
\begin{align}
\Delta V + h(V) = 0 \quad &\text{for}\ x \in \Omega,\label{th3.3-eq1}\\
\partial_\nu V = 0 \quad &\text{for}\ x \in \partial \Omega.
\label{th3.3-ini}
\end{align}
By Theorem  \ref{lem:N}, there exists a constant solution $\v$ of problem 
\eqref{th3.3-eq1}-\eqref{th3.3-ini}
and  $x_0\in \bOmega$ such that
\begin{equation}
V(x_0)=\v, \quad h(\v)=0, \quad\text{and}\quad  h^\prime (\v) \geq 0. \label{th3.3-eq2}
\end{equation}

On the other hand, differentiating the function $h(v) = g\big(k(v), v\big)$  yields
\begin{equation}
 h^\prime (v) = k^\prime (v)g_u \big(k(v), v \big) + g_v \big(k(v), v \big). \label{th3.3-eq3}
\end{equation}
Moreover, we differentiate both sides of the equation $f \big(k(v),
 v \big) = 0$ with respect to $v$ to obtain 
$ k^\prime (v) f_u \big(k(v), v \big) + f_v \big(k(v), v \big) = 0$. %
Hence, 
\begin{equation}
k^\prime (v) = - \frac{f_v \big(k(v), v \big)}{f_u \big(k(v), v \big)}. \label{th3.3-eq4}
\end{equation}
Finally, choosing $v=\v$ and $u=\u=k(\v)$ and 
substituting equation \eqref{th3.3-eq4} into \eqref{th3.3-eq3}, 
we obtain
\begin{align*}
 h^\prime (\v) &= - \frac{f_v \big(k(\v), \v \big)}{f_u \big(k(\v), \v \big)}g_u
 \big(k(\v), \v \big) + g_v \big(k(\v), \v \big)\\
& = \frac{1}{f_u (\u, \v)}\big[f_u (\u, \v) g_v (\u, \v) - f_v (\u, \v)
 g_u (\u, \v) \big]\\
&=\frac{1}{f_u(\u,\v)}\det 
\left(
\begin{array}{cc}
f_u(\u,\v)&f_v(\u,\v)\\
g_u(\u,\v)&g_v(\u,\v)\\
\end{array}
\right).
\end{align*}
By \eqref{th3.3-eq2}, it holds $h'(\v)\geq0$. Moreover,  
since $(\u, \v)$ is non-degenerate, we obtain $h'(\v)>0$.
This completes the proof of inequality \eqref{ineq}.
\end{proof}




\appendix

\section{Model of early carcinogenesis -- quasi-stationary approximation }

In Appedix, we discuss in detail the model example from Subsection \ref{sub:carc}.

Initial-boundary value problems for  reaction-diffusion-ODE systems  arise in the modeling of the growth of a spatially-distributed cell population, where proliferation is controlled by endogenous or exogenous growth factors diffusing in the extracellular medium and binding to cell surface as proposed by Marciniak and Kimmel in the series of recent papers \cite{MCK06, MCK07, MCK08}.
Here, we consider the following particular case of such  systems,
which was studied by us in \cite{MKS12}
\begin{alignat}{2}
\partial_t u_{\ve}&=\Big(\frac{a v_{\ve}}{u_{\ve}+v_{\ve}} -d_c\Big) u_{\ve} &&\equiv f(u_{\ve},v_{\ve}), \label{eq1epA}\\
\ve \partial_t v_{\ve} &=-d_b v_{\ve} +u_{\ve}^2 w_{\ve} -d v_{\ve}&&\equiv g(u_{\ve},v_{\ve},w_{\ve}), \label{eq2epA}\\
\partial_t w_{\ve}-D\Delta w_{\ve} +d_g w_{\ve}&=  -u_{\ve}^2 w_{\ve} +d v_{\ve} +\kappa_0&&\equiv h(u_{\ve},v_{\ve},w_{\ve}), \label{eq3epA}
\end{alignat}
on $(0,\infty)\times \Omega$, supplemented with zero-flux boundary conditions for $w_{\ve}$
\begin{equation}\label{NepA}
\partial_\nu w_{\ve}(x,t)=0 \quad \mbox{for all} \quad t>0, \:\:x\in \partial \Omega
\end{equation}
and with nonnegative, smooth and bounded initial data
\begin{equation}
u_{\ve}(x,0)=u_0(x), \quad v_{\ve}(x,0)=v_0(x), \quad w_{\ve}(x,0)=w_0(x).\label{iniepA}
\end{equation}
In equations \eqref{eq1epA}-\eqref{eq3epA}, the letters 
$a , d_c,d_b, d_g, d,  \kappa_0,D $  denote positive constants.

As it was shown in \cite[Sec.~3]{MKS12}, solutions of this problem are nonnegative and stay bounded on every interval $[0,T]$. Here, we study the behavior of these solutions as $\ve\rightarrow 0$.

First, we notice that choosing $\ve= 0$ in equation \eqref{eq2epA}, we obtain 
the identity
\begin{equation}\label{v_quasi}
v=\frac{u^2 w}{d_b+d}.
\end{equation}
Substituting it to the two remaining equations 
\eqref{eq1epA}, \eqref{eq3epA}
yields the system
\begin{alignat}{2}
&u_t=\Big(\frac{a uw}{d_b+d+uw} -d_c\Big) u &&\text{for}\ x\in \overline\Omega, \; t>0, \label{eqn1rA}\\
&w_t = D\Delta w -d_g w -\frac{d_b}{d_b+d} u^2 w  +\kappa_0 && \text{for}\ x\in \Omega, \; t>0, \label{eqn2rA}\\
&\partial_\nu w (x,t)=0 && \text{for} \ x\in \partial \Omega, \;\; t>0,\label{NrA}\\
&u(x,0)=u_0(x)\quad \text{and}  \quad w(x,0)=w_0(x).&& \label{inirA}
\end{alignat}
Repeating our reasoning from \cite{MKS12} one can show that 
this new  system has also a unique, global-in-time  
nonnegative, smooth solution for  nonnegative and smooth initial data. 

In this part of Appendix, we show that  solutions of the quasi-stationary system \eqref{v_quasi}-\eqref{inirA} are good approximation of  solutions of the original three-equation model  \eqref{eq1epA}-\eqref{iniepA}. Our main result concerns uniform estimates for an approximation error for $u$ and $w$ 
on each finite time interval $[0,T]$.

First, we show that solutions of $\ve$-problem \eqref{eq1epA}-\eqref{iniepA} are uniformly bounded with respect to small $\ve>0$, locally in time.

\begin{lemma}\label{lem:A}
For every $T>0$ there exists $C(T)>0$  such for all sufficiently small $\varepsilon>0$  
{\rm (}e.g. $\varepsilon \in \big(0, (d_b+d)/(2d_g)\big)${\rm )}
the solution of system \eqref{eq1epA}-\eqref{iniepA} satisfies
$$
\|u_{\ve}(t)\|_{L^{\infty}(\Omega)}\leq C(T), \qquad \|v_{\ve}(t)\|_{L^{\infty}(\Omega)}\leq C(T), \qquad \|w_{\ve}(t)\|_{L^{\infty}(\Omega)}\leq C(T)
$$
for all $t\in [0,T]$.
\end{lemma}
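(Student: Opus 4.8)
The plan is to establish the three uniform bounds in a definite order, exploiting that the $u$-equation decouples at the level of an a priori bound and that the remaining $(v_\ve,w_\ve)$-subsystem is cooperative. Throughout I use that $u_\ve,v_\ve,w_\ve\ge 0$, which is guaranteed by the results of \cite{MKS12} recalled above. First I would bound $u_\ve$: since $v_\ve\ge 0$ and $u_\ve\ge 0$ we have $0\le \frac{a v_\ve}{u_\ve+v_\ve}\le a$, so the $x$-pointwise ordinary differential equation \eqref{eq1epA} gives $\partial_t u_\ve\le (a-d_c)u_\ve\le a\,u_\ve$. Because \eqref{eq1epA} contains no diffusion, a pointwise Gronwall argument yields, uniformly in $\ve$,
\[
 0\le u_\ve(x,t)\le M_u:=\|u_0\|_{L^\infty(\Omega)}\,e^{aT}\qquad\text{for all }x\in\bOmega,\ t\in[0,T].
\]

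With $u_\ve$ controlled, I would treat $u_\ve$ as a given nonnegative coefficient with $u_\ve^2\le M_u^2$ and observe that the subsystem \eqref{eq2epA}-\eqref{eq3epA} is \emph{cooperative}: the right-hand side of the $v$-equation is increasing in $w$ (coefficient $u_\ve^2\ge 0$) and the right-hand side of the $w$-equation is increasing in $v$ (coefficient $d>0$). This makes a spatially homogeneous supersolution available. I would define $(\bar v(t),\bar w(t))$ as the solution of the linear system obtained by inserting the worst-case coefficients,
\[
 \ve\dot{\bar v}=-(d_b+d)\bar v+M_u^2\,\bar w,\qquad \dot{\bar w}=-d_g\bar w+d\,\bar v+\kappa_0,
\]
with $\bar v(0)=\|v_0\|_{L^\infty(\Omega)}$, $\bar w(0)=\|w_0\|_{L^\infty(\Omega)}$. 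Since $\bar v,\bar w\ge 0$, one checks directly that for every $x$ the pair $(\bar v,\bar w)$ satisfies \eqref{eq2epA}-\eqref{eq3epA} with ``$\ge$'': the source in the $v$-equation obeys $u_\ve^2\bar w\le M_u^2\bar w$, while the sink $-u_\ve^2\bar w\le 0$ is simply dropped in the $w$-equation.

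The decisive point is that $(\bar v,\bar w)$ is bounded on $[0,T]$ \emph{uniformly in} $\ve$, despite the stiff factor $\ve^{-1}$. To see this I would pass to the combination $z:=\ve\bar v+\bar w$, the variable in which the $\ve^{-1}$ terms are eliminated: a direct computation gives $\dot z=-d_b\bar v+(M_u^2-d_g)\bar w+\kappa_0\le M_u^2 z+\kappa_0$, using $\bar v\ge 0$ and $\bar w\le z$. Gronwall's inequality then yields $z(t)\le C(T)$ with $C(T)$ independent of $\ve$ (for $\ve\le 1$; the explicit threshold $(d_b+d)/(2d_g)$ quoted in the statement is merely one convenient sufficient choice). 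Hence $\bar w\le z\le C(T)$, and feeding this back into the $\bar v$-equation gives $\bar v(t)\le \|v_0\|_{L^\infty(\Omega)}+M_u^2 C(T)/(d_b+d)$, again uniformly in $\ve$.

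Finally I would invoke a comparison principle to conclude $v_\ve(x,t)\le \bar v(t)$ and $w_\ve(x,t)\le \bar w(t)$ on $[0,T]\times\bOmega$, which together with the bound on $u_\ve$ completes the proof. This last step is the main obstacle, since it requires a maximum principle for a \emph{mixed} ODE--PDE cooperative system. Setting $p:=\bar v-v_\ve$ and $q:=\bar w-w_\ve$, the differences satisfy $\ve\,\partial_t p\ge -(d_b+d)p+u_\ve^2 q$ and $\partial_t q-D\Delta q\ge -(d_g+u_\ve^2)q+d\,p$, with $p(0),q(0)\ge 0$ and $\partial_\nu q=0$. Because the off-diagonal couplings $u_\ve^2$ and $d$ are nonnegative, positivity of $(p,q)$ follows by a standard argument (examining $e^{-Kt}(p,q)$ for $K$ large at the first instant its minimum would touch zero, using $\Delta q\ge 0$ at an interior spatial minimum and the Hopf lemma against the Neumann condition at a boundary minimum, while the $p$-component is treated pointwise as an ODE). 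Care is needed precisely here, because $v_\ve$ carries neither diffusion nor a boundary condition, so the parabolic maximum principle for $q$ must be coupled with the pointwise ODE estimate for $p$.
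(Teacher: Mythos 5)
Your proof is correct, but it takes a genuinely different route from the paper's. The paper bounds $u_\ve$ exactly as you do, and then works with the solution itself rather than with a supersolution: it applies the \emph{scalar} parabolic comparison principle to equation \eqref{eq3epA} alone, obtaining $\|w_\ve(t)\|_{L^\infty(\Omega)}\le C_w(t)$ with $C_w$ given by the explicit formula \eqref{Cw_est} in terms of $\int_0^t e^{-d_g(t-\tau)}\|v_\ve(\tau)\|_{L^\infty(\Omega)}\,d\tau$; it handles the stiff equation \eqref{eq2epA} by the variation-of-constants estimate \eqref{est:ve} with kernel $\ve^{-1}e^{-(d_b+d)(t-\tau)/\ve}$, substitutes, and swaps the order of integration so that the factor $\ve^{-1}$ is absorbed into $\ve/(d_b+d-\ve d_g)$ --- this computation is precisely where the threshold $\ve<(d_b+d)/(2d_g)$ enters --- after which a Gronwall argument closes the estimate for $\|w_\ve\|_{L^\infty(\Omega)}$ (inequality \eqref{w_est}) and the bound for $v_\ve$ follows by back-substitution. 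You instead freeze $u_\ve^2\le M_u^2$, exploit the cooperative structure of the $(v,w)$-subsystem to compare with a spatially homogeneous supersolution $(\bar v,\bar w)$, and defuse the stiffness through the combination $z=\ve\bar v+\bar w$ rather than through a kernel computation; your derivation of $\dot z\le M_u^2 z+\kappa_0$ and the back-substituted bound $\bar v\le\|v_0\|_{L^\infty(\Omega)}+M_u^2C(T)/(d_b+d)$ are both correct and uniform in $\ve$. What each approach buys: yours is arguably cleaner and dispenses with the threshold $(d_b+d)/(2d_g)$ altogether (you need $\ve$ bounded, say $\ve\le 1$, only to control $z(0)=\ve\|v_0\|_{L^\infty(\Omega)}+\|w_0\|_{L^\infty(\Omega)}$), but it pays with a comparison principle for a mixed ODE--PDE cooperative system --- exactly the nonstandard ingredient the paper's arrangement is designed to avoid, since the paper invokes comparison only for the scalar parabolic $w$-equation and treats $v_\ve$ by an explicit formula. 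Your sketch of that mixed comparison is sound: the differential inequalities for $(p,q)$ are derived correctly (nonnegativity of $\bar w$ and $u_\ve^2\le M_u^2$ yield the couplings $u_\ve^2q$ and $dp$ with nonnegative coefficients), and the $e^{-Kt}$ normalization works, with the harmless caveat that the required $K$ may depend on $\ve$ because of the factor $\ve$ multiplying $\partial_t p$; since the comparison is a qualitative nonnegativity statement for each fixed $\ve$, this does not affect the $\ve$-uniformity of the final bounds. Two small points deserve a word in a polished write-up: the nonnegativity $\bar v,\bar w\ge 0$, which you assert, follows from the cooperative structure of the linear supersolution system itself; and the pointwise-in-$x$ ODE treatment of $p$ uses the smoothness in $x$ of $u_\ve,v_\ve$, which is available from the existence theory of \cite{MKS12} recalled before the lemma.
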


It will be clear from the proof of Lemma \ref{lem:A} that the constant $C(T)$ growths exponentially in $T>0$.

\begin{proof}[Proof of Lemma  \ref{lem:A}.]

Since $v_\ve/(u_\ve+v_\ve)\leq 1$ for nonnegative solutions,
equation  \eqref{eq1epA} yields the inequality,
\begin{equation}\label{est_uep}
\|u_{\ve}(t)\|_{L^{\infty}(\Omega)}\leq M(t)\equiv \|u_0\|_{L^{\infty}(\Omega)} e^{(a-d_c)t}.
\end{equation}
Hence, we have the estimate $\|u_{\ve}(t)\|_{L^{\infty}(\Omega)}\leq C(T)= \|u_0\|_{L^{\infty}(\Omega)} e^{(a-d_c)T}$ for all $t\in [0,T]$.

Applying  the comparison principle to the parabolic equation  \eqref{eq3epA} with the Neumann boundary condition 
we obtain the estimate
\begin{equation}\label{ineq:w:C}
0 \leq w_{\ve} (x,t) \leq  \| w_{\ve} (t)\|_{L^{\infty}(\Omega)} \leq C_w (t),
\end{equation}
where  the function $C_w=C_w(t)$ satisfies the Cauchy problem
\begin{equation}\label{Cw_esteq}
\begin{split}
\frac{d}{dt} C_{w}+d_g C_{w} &=  d \|v_{\ve}(t)\|_{L^{\infty}(\Omega)} +\kappa_0 \\
 C_{w}(0)&= \| w_0\|_{L^{\infty}(\Omega)},
\end{split}
\end{equation}
and is given by the formula 
\begin{equation}\label{Cw_est}
C_{w}(t) = \frac{\kappa_0}{d_g} +\left( \| w_0\|_{L^{\infty}(\Omega)}- \frac{\kappa_0}{d_g} \right) e^{-d_g t} +  d \int_0^t e^{-d_g(t-\tau)}\| v_{\ve}(\tau)\|_{L^{\infty}(\Omega)} d\tau.
\end{equation}

Next, we use   equation \eqref{eq2epA} to obtain
\begin{equation}\label{est:ve}
\|v_{\ve}(t)\|_{L^{\infty}(\Omega)} \leq \|v_0\|_{L^{\infty}(\Omega)}e^{-\frac{d_b+d}{\ve}t} + \frac{1}{\ve} \int_0^t M^2(\tau)\| w_{\ve}(\tau)\|_{L^{\infty}(\Omega)} e^{-\frac{d_b+d}{\ve}(t-\tau)} d\tau.
\end{equation}
Thus, using inequality \eqref{est_uep} and plugging the above estimate into \eqref{Cw_est} yields
\begin{equation}\label{Cw_est2}
\begin{split}
C_{w}(t) \leq& \frac{\kappa_0}{d_g} +\left( \| w_0\|_{L^{\infty}(\Omega)}- \frac{\kappa_0}{d_g} \right) e^{-d_g t} + d \|v_0\|_{L^{\infty}(\Omega)}  \int_0^t e^{-d_g(t-\tau)} e^{-\frac{d_b+d}{\ve}\tau} d\tau \\
 &+ \frac{ d \|u_0\|^2_{L^{\infty}(\Omega)}}{\ve} \int_0^t e^{-d_g(t-\tau)} \int_0^{\tau}  e^{2(a-d_c)\xi} \| w_{\ve}(\xi)\|_{L^{\infty}(\Omega)} e^{-\frac{d_b+d}{\ve}(\tau-\xi)} d\xi d\tau.
\end{split}
\end{equation}
Changing the order of integration, we can simplify  the last term on the right-hand side
\begin{eqnarray*}
&& \int_0^t e^{-d_g(t-\tau)} \int_0^{\tau}  e^{2(a-d_c)\xi} \| w_{\ve}(\xi)\|_{L^{\infty}(\Omega)} e^{-\frac{d_b+d}{\ve}(\tau-\xi)} d\xi d\tau\\
&&\quad =  \int_0^t \frac{\ve  \| w_{\ve}(\xi)\|_{L^{\infty}(\Omega)} }{d_b+d- \ve d_g}e^{-d_g(t-\xi)}e^{2(a-d_c)\xi}  d\xi - \int_0^t  \frac{\ve  \| w_{\ve}(\xi)\|_{L^{\infty}(\Omega)} }{d_b+d- \ve d_g}e^{2(a-d_c)\xi}e^{\frac{d_b+d}{\ve}(\xi-t)} d\xi
\end{eqnarray*}
hence, for $0<\varepsilon < \frac{d_b+d}{d_g}$, using inequalities \eqref{ineq:w:C}
we obtain
\begin{equation}\label{w_est}
  \| w_{\ve} (t)\|_{L^{\infty}(\Omega)} \leq C + \frac{d \| u_0\|^2_{L^{\infty}(\Omega)} }{d_b+d- \ve d_g} \int_0^t e^{2(a-d_c)\xi} \|w_{\ve}(\xi)\|_{L^{\infty}(\Omega)}e^{-d_g(t-\xi)}d\xi,
  \end{equation}
  where $C=C( \| w_0\|_{L^{\infty}(\Omega)}, \| v_0\|_{L^{\infty}(\Omega)})$ is independent of $T$ and of $\ve$.
 Finally, the Gronwall inequality applied to \eqref{w_est}  implies the estimate 
\begin{equation}\label{w_estfinal}
  \| w_{\ve} (t)\|_{L^{\infty}(\Omega)} \leq C  e^{C_1(T)}\qquad \mbox{for all} \quad 0\leq t \leq T
  \end{equation}
and for 
all sufficiently small $\varepsilon>0$  
{\rm (}{\it e.g.} $\varepsilon \in \big(0, (d_b+d)/(2d_g)\big)${\rm )}, where  positive 
constants $C$ and  $C_1(T)$ are independent of $\varepsilon$.

Finally, estimate \eqref{w_estfinal} applied to inequality \eqref{est:ve}
implies an analogous bound for~$v_{\ve}$.
\end{proof}


\begin{theorem}
Let $(u_{\ve},v_{\ve},w_{\ve})$  be a solution of system \eqref{eq1epA}- \eqref{iniepA} with sufficiently small $\varepsilon>0$  
{\rm (}e.g. $\varepsilon \in \big(0, (d_b+d)/(2d_g)\big)${\rm )}
and $(u, w)$ be a solution of the corresponding  system 
\eqref{v_quasi}--\eqref{inirA}. 
For each $T>0$, there exists a constant $C(T)>0$ independent of $\varepsilon$ such that 
\begin{eqnarray}
\max_{ t\in [0,T]}\|u_{\ve}(t)-u(t)\|_{L^\infty(\Omega)} \leq C(T) \ve,\label{estuni1}\\ 
\max_{ t\in [0,T]}\|w_{\ve}(t)-w(t)\|_{L^\infty(\Omega)} \leq C(T) \ve \label{estuni2}.
\end{eqnarray}
Additionally, we also have
\begin{eqnarray}
\int_0^T\|v_{\ve}(t)-v(t)\|_{L^\infty(\Omega)} \leq C(T) \ve,\label{estuni3}
\end{eqnarray}
where $v$ is given by equation \eqref{v_quasi}.
\end{theorem}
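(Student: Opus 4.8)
The plan is to treat this as a Tikhonov-type singular perturbation (quasi-steady-state) limit, in which $v_{\ve}$ is the fast variable relaxing, at rate $(d_b+d)/\ve>0$, onto the slow manifold $v=u^2w/(d_b+d)$ given by \eqref{v_quasi}. First I would record the quantitative ingredients already available: Lemma~\ref{lem:A} gives $\|u_{\ve}(t)\|_{\infty},\|v_{\ve}(t)\|_{\infty},\|w_{\ve}(t)\|_{\infty}\le C(T)$ uniformly in small $\ve$, while the reduced solution $(u,w)$ together with $v=u^2w/(d_b+d)$ is smooth and bounded on $[0,T]$ independently of $\ve$. Set $e_u=u_{\ve}-u$, $e_w=w_{\ve}-w$, $e_v=v_{\ve}-v$. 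Since both systems share the initial data $u_0,w_0$, we have $e_u(0)=e_w(0)=0$, whereas generically $e_v(0)=v_0-u_0^2w_0/(d_b+d)\neq 0$. This mismatch produces an initial layer of width $O(\ve)$ in which $e_v$ is $O(1)$, which is exactly why the theorem measures $e_v$ in the time-integrated norm $\int_0^T\|\cdot\|_{\infty}\,dt$ rather than uniformly.

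Next I would derive the three error equations. For $u$ I would \emph{not} work with the singular ratio $av/(u+v)$ but with the full reaction $F(u,v)=auv/(u+v)-d_cu$; a direct computation gives $|\partial_uF|\le a+d_c$ and $|\partial_vF|\le a$ on the nonnegative quadrant, so $F$ is globally Lipschitz there and the apparent singularity at the origin is harmless. Since the reduced $u$-equation is obtained by substituting $v=u^2w/(d_b+d)$ into $f$, its reaction term is exactly $F\big(u,\,u^2w/(d_b+d)\big)=F(u,v)$, whence $\partial_te_u=F(u_{\ve},v_{\ve})-F(u,v)$ and the pointwise-in-$x$ bound $|\partial_te_u|\le L(|e_u|+|e_v|)$. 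For $w$, subtracting the two equations gives $\partial_te_w=D\Delta e_w-d_ge_w-(u_{\ve}^2w_{\ve}-u^2w)+d\,e_v$, with $|u_{\ve}^2w_{\ve}-u^2w|\le C(|e_u|+|e_w|)$. For $v$ I would subtract the algebraic relation $(d_b+d)v=u^2w$ from $\ve\partial_tv_{\ve}=-(d_b+d)v_{\ve}+u_{\ve}^2w_{\ve}$ and use $\partial_tv_{\ve}=\partial_te_v+\partial_tv$ to obtain
\[
\ve\,\partial_te_v+(d_b+d)e_v=(u_{\ve}^2w_{\ve}-u^2w)-\ve\,\partial_tv .
\]
The essential point of this arrangement is that the only time derivative appearing is $\partial_tv$ of the \emph{fixed reduced} solution, which is bounded on $[0,T]$ uniformly in $\ve$ by the parabolic regularity of $w$; I thereby avoid any need to control $\partial_tw_{\ve}$, i.e. $\Delta w_{\ve}$, which is not uniformly bounded.

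I would then close the estimate by a coupled Gronwall argument. Solving the $e_v$-equation by Duhamel with the fast kernel $\ve^{-1}e^{-(d_b+d)(t-s)/\ve}$, integrating in $t$, and exchanging the order of integration (Fubini) yields
\[
\int_0^t\|e_v(s)\|_{\infty}\,ds\le C\ve+\frac{C}{d_b+d}\int_0^t\big(\|e_u(s)\|_{\infty}+\|e_w(s)\|_{\infty}\big)\,ds ,
\]
where $C\ve$ collects both the initial-layer contribution $\|e_v(0)\|_{\infty}\,\ve/(d_b+d)$ and the $\ve\,\partial_tv$ term. Integrating the pointwise bound for $e_u$ gives $\|e_u(t)\|_{\infty}\le L\int_0^t(\|e_u\|_{\infty}+\|e_v\|_{\infty})\,ds$, while the mild formulation of the $e_w$-equation, using that the Neumann heat semigroup is an $L^{\infty}$-contraction (or, equivalently, the comparison principle), gives $\|e_w(t)\|_{\infty}\le C\int_0^t(\|e_u\|_{\infty}+\|e_w\|_{\infty}+\|e_v\|_{\infty})\,ds$. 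Adding these two and inserting the displayed bound for $\int\|e_v\|_{\infty}$ produces $\|e_u(t)\|_{\infty}+\|e_w(t)\|_{\infty}\le C\ve+C'\int_0^t(\|e_u\|_{\infty}+\|e_w\|_{\infty})\,ds$, and Gronwall's inequality yields \eqref{estuni1}--\eqref{estuni2}. Feeding this back into the $\int\|e_v\|_{\infty}$ bound gives \eqref{estuni3}.

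The hard part, and the only place where the structure of the model is genuinely used, is the handling of the fast variable $v_{\ve}$: one must extract an $O(\ve)$ gain from the convolution against the sharply peaked kernel $\ve^{-1}e^{-(d_b+d)(t-s)/\ve}$ \emph{after} integrating in time (the pointwise difference $e_v$ does not decay, because of the initial layer), and one must set up the $e_v$-equation so that only $\partial_tv$ of the fixed limit appears, never $\Delta w_{\ve}$. The nonnegativity of the solutions, already guaranteed by the analysis of \cite{MKS12}, is what makes $F$ Lipschitz despite its singularity at the origin and thereby keeps the Gronwall constants $\ve$-independent.
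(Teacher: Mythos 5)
Your proposal is correct and follows essentially the same route as the paper's proof: the same error variables $e_u,e_v,e_w$ (the paper's $\alpha,\beta,\delta$), the same Duhamel representation of the fast $v$-error against the kernel $\ve^{-1}e^{-(d_b+d)(t-s)/\ve}$ with the $O(\ve)$ gain extracted in the $L^1(0,T;L^\infty(\Omega))$ norm via Young's inequality (absorbing both the initial layer from $e_v(0)\neq 0$ and the $\ve\,\partial_t v$ term), and the same comparison-principle/semigroup treatment of the $w$-error avoiding any control of $\Delta w_\ve$. The only difference is organizational: you close a standard Gronwall inequality for $\|e_u(t)\|_{L^\infty}+\|e_w(t)\|_{L^\infty}$ directly on all of $[0,T]$, whereas the paper closes on $X=\|\beta\|_{L^1(0,t;L^\infty)}$, arriving at $X\leq C\ve+CtX$ and then iterating over finitely many time steps of length $t_0=1/(2C)$ --- your arrangement is marginally cleaner but mathematically equivalent.
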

\begin{proof}
Letting $\alpha=u_{\ve}-u$, $\beta=v_{\ve}-v$ and $\delta=w_{\ve}-w$, we obtain by the Taylor expansion the following system
\begin{align}
&\partial_t{\alpha}=f(u_{\ve},v_{\ve}) -f(u,v)= -d_c\alpha + f_1\alpha + f_2\beta, \label{eqalpha}\\
&\ve\partial_t  {\beta} = g(u_{\ve},v_{\ve},w_{\ve}) -g(u,v,w)- \ve \partial_t v =-(d+d_b)\beta +g_1\alpha + g_2 \delta-\ve  \partial_t v, \label{eqbeta}\\
&\partial_t  {\delta} -D\Delta {\delta}_{xx} +(d_g+g_2) {\delta}=  h_1 \alpha + d\beta,\label{eqdelta}
\end{align}
supplemented with the initial conditions
$$
\alpha(x,0)=0, \qquad \delta(x,0)=0, \qquad \beta(x,0)=v_0(x)-\widetilde v_0(x)
$$
with $\widetilde v_0$ obtained from $u_0$ and $w_0$ via formula \eqref{v_quasi},
and with the Neumann boundary condition for $\delta(x,t)$.
In equations \eqref{eqalpha}-\eqref{eqdelta} the following coefficients 
\begin{align*}
f_1=\frac{\partial f}{\partial u}+d_c=\frac{av^2}{(u+v)^2},\qquad 
f_2=\frac{\partial f}{\partial v}=\frac{au^2}{(u+v)^2},\\
g_1=\frac{\partial g}{\partial u}=2uw,\qquad 
g_2=\frac{\partial g}{\partial w}=-\frac{\partial h}{\partial w}=u^2,\qquad 
h_1=\frac{\partial h}{\partial u}=-2uw
\end{align*}
are calculated in certain intermediate points and are bounded independently of $\varepsilon$ due to Lemma \ref{lem:A}.

%
 The proof is divided into three steps.

{\it Step 1:} First, applying  the comparison principle to the parabolic equation \eqref{eqdelta}
with the Neumann boundary condition and with the zero initial datum 
we obtain the estimate
$$
\|\delta(\cdot,t)\|_{L^{\infty}(\Omega)}\leq C_\delta(t) \qquad\text{for every}\quad t\in [0,T],
$$
where $C_\delta$ is a solution of the Cauchy problem 
\begin{eqnarray*}
&& C_{\delta}(0)=0,\\
&&\frac{d}{dt} C_{\delta}+d_g C_{\delta} = \| h_1\|_{L^\infty(\Omega\times [0,T])}\|\alpha(t)\|_{L^{\infty}(\Omega)} +  d \|\beta(t)\|_{L^{\infty}(\Omega)}.
\end{eqnarray*}
Since
\begin{equation*}
C_{\delta}(t) =\int_0^t e^{-d_g(t-\tau)} \big(\| h_1\|_{L^{\infty}(\Omega\times [0,T])}\|\alpha(\tau)\|_{L^{\infty}(\Omega)} +  d \|\beta(\tau)\|_{L^{\infty}(\Omega)}\big)d\tau
\end{equation*}
using the Young inequality for a convolution and the estimate 
$e^{-d_g(t-\tau)}\leq 1$, we obtain
\begin{equation}\label{deltaest}
\|\delta(\cdot,t)\|_{L^{\infty}(\Omega)}\leq \|C_{\delta}\|_{L^{\infty}(0,t)}  \leq \| h_1\|_{L^{\infty}(\Omega\times [0,T])} \|\alpha\|_{L^1(0,t;L^{\infty}(\Omega))} +  d \|\beta\|_{L^1(0,t;L^{\infty}(\Omega)).}
\end{equation}

{\it Step 2:} Next, we estimate the solution $\beta=\beta(x,t)$ of equation \eqref{eqbeta}. First, note that  
$$
\|e^{-(d+d_b)\frac{\tau}{\varepsilon}}\|_{L^{1}(0,t)} \leq \frac {\varepsilon}{d+d_b}  \qquad \text{for all } \quad t>0.
$$
 The solution of equation \eqref{eqbeta} satisfies the formula 
$$
\beta(x,t) = \beta(x,0) e^{-\frac{d+d_b}{\varepsilon}t} + \int_0^t e^{-\frac{d+d_b}{\varepsilon}(t-\tau)}\left(-\partial_{\tau}v  + \frac{1}{\varepsilon} (g_1\alpha + g_2 \delta)\right)d{\tau}.
$$
Consequently, the Young inequality yields
\begin{equation}\label{betaL1}
\begin{split}
\|{\beta}&\|_{L^1(0,t;L^{\infty}(\Omega))} \\  
\leq&  \left( \|{\beta(0)}\|_{L^{\infty}(\Omega)} +   \|\partial_{\tau} v\|_{L^1(0,t;L^{\infty}(\Omega))}\right)C \varepsilon  \\
&+ \: C  \| g_1\|_{L^{\infty}(\Omega\times[0,T])} \|\alpha\|_{L^1(0,t;L^{\infty}(\Omega))} +  C  \| g_2\|_{L^{\infty}(\Omega\times[0,T])}  \|\delta\|_{L^1(0,t;L^{\infty}(\Omega))}\\
\leq& C\varepsilon +  C\left(  \| g_1\|_{L^{\infty}(\Omega\times[0,T])} +  t\| h_1\|_{L^{\infty}(\Omega\times[0,T])}  \| g_2\|_{L^{\infty}(\Omega\times[0,T])}   \right)\|{\alpha}\|_{L^1(0,t;L^{\infty}(\Omega))}\\ 
&+\:Ct d    \| g_2\|_{L^{\infty}(\Omega\times[0,T])} \|{\beta}\|_{L^1(0,t;L^{\infty}(\Omega))},
\end{split}
\end{equation}
where the last inequality results from \eqref{deltaest}.
Here, in the first inequality of \eqref{betaL1}, the function $v(x,t)$ is given by formula
\eqref{v_quasi}. 
Hence, the quantity 
$\|\partial_{\tau} v\|_{L^1(0,t;L^{\infty}(\Omega))}$ is finite
(and obviously independent of $\ve$)
for smooth solutions by equations \eqref{eqn1rA} and \eqref{eqn2rA}.

{\it Step 3:} Finally, we estimate the solution $\alpha=\alpha(x,t)$ 
of equation  \eqref{eqalpha}. Note that $\alpha(x,0)=0$ and 
\begin{equation*}
\alpha(t,x) = \int_0^t \left(f_2(\tau)\beta(\tau)   e^{-d_c (t-\tau) + \int_{\tau}^t f_1(\xi)d{\xi}}\right) d\tau.
\end{equation*}
Thus, using the Young inequality again we obtain the estimate
\begin{equation}\label{alphainfty}
\|{\alpha}\|_{L^{\infty}((0,t)\times \Omega)}   \leq  C e^{(a-d_c)T} \|f_2\|_{L^{\infty}(\Omega)} \|\beta\|_{L^1(0,t;L^{\infty}(\Omega))}
\end{equation}
as well as 
\begin{equation}\label{alphaL1}
\|{\alpha}\|_{L^1(0,t;L^{\infty}(\Omega))}   \leq  Ct \|\beta\|_{L^1(0,t;L^{\infty}(\Omega))}.
\end{equation}
Inserting inequality \eqref{alphaL1}  into \eqref{betaL1} leads to
\begin{eqnarray}\label{betaL1new}
\|{\beta}\|_{L^1(0,t;L^{\infty}(\Omega))}   \leq  C\varepsilon + C t \|{\beta}\|_{L^1(0,t;L^{\infty}(\Omega))}.
\end{eqnarray}
For $t\leq t_0=\frac{1}{2C}$ we conclude that
\begin{equation}\label{betaL1final}
\|{\beta}\|_{L^1(0,t;L^{\infty}(\Omega))}   \leq  C\varepsilon
\qquad \text{for all}\quad  t\leq t_0.
\end{equation}
Since every $t\in (t_0,T]$ can be reached after a finite number of steps, 
 estimate \eqref{betaL1final} holds for every  $t\in[0,T]$.
Furthermore,  inequality \eqref{betaL1final} applied in 
\eqref{deltaest} and 
\eqref{alphainfty}
completes the proof of  estimates \eqref{estuni1}-\eqref{estuni3}.
\end{proof}
\begin{rem}
To obtain a better estimate of $\beta$, one should
 construct an initial value layer, since $\beta|_{t=0}\not = 0$.
\end{rem}
%
%

\section{Model of early carcinogenesis -- constant steady states}
Here, we consider the space homogeneous solutions 
of the two-equation model \eqref{v_quasi}-\eqref{inirA} which satisfy 
 the corresponding kinetic system
\begin{align}
u_t &= \left(\dfrac{auw}{d_b + d + uw} - d_c \right)u, \label{eq1kA}\\
w_t &= - d_g w -\dfrac{d_b}{d_b + d}u^2w + \kappa_0,\label{eq2kA}
\end{align}
where $a$, $d_c$, $d_b$, $d$, $d_g$, $\kappa_0$ are positive
constants and we have always assumed that $a> d_c$. %
The structure of constant steady states of this system is the same as of the original three-equation model and can be characterized by the following lemma (for the proof see~\cite{MKS12}). 
\begin{lemma}
Let $\Theta = 4d_g
 \left(\dfrac{d_c}{a-d_c}\right)^2 d_b (d_b + d)$. If  $\kappa_0^2 > \Theta$ , 
then system \eqref{eq1kA}--\eqref{eq2kA} has two positive steady states $(u_- , w_-)$ and $(u_+ , w_+)$ with
\begin{equation}\label{u_stst}
u_\pm = \dfrac{d_c}{a-d_c}(d_b + d)\dfrac{1}{w_\pm} \quad\text{and}\quad 
w_\pm = \dfrac{\kappa_0 \pm \sqrt{\kappa_0^2 - \Theta}}{2d_g}. 
\end{equation}
\end{lemma}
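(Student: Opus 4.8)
The plan is to compute the positive zeros of the right-hand sides of the kinetic system \eqref{eq1kA}--\eqref{eq2kA} directly. A constant steady state $(u,w)$ with $u,w>0$ must satisfy $u_t=0$ and $w_t=0$ simultaneously. First I would exploit the first equation: since $u>0$, the factor $\frac{auw}{d_b+d+uw}-d_c$ has to vanish, and solving this for the product $uw$ gives
$$
uw=\frac{d_c(d_b+d)}{a-d_c}.
$$
The standing assumption $a>d_c$ guarantees that the right-hand side is positive. This single relation already yields the claimed identity $u=\frac{d_c}{a-d_c}(d_b+d)\,\frac{1}{w}$, expressing $u$ in terms of $w$.

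Next, I would substitute this relation into the second steady-state equation. Abbreviating $P=uw=\frac{d_c(d_b+d)}{a-d_c}$ and using $u^2w=P^2/w$, the equation $w_t=0$ becomes $-d_gw-\frac{d_b}{d_b+d}\frac{P^2}{w}+\kappa_0=0$. Multiplying through by $w>0$ converts this into the quadratic
$$
d_g\,w^2-\kappa_0\,w+\frac{d_b}{d_b+d}\,P^2=0,
$$
whose roots are $w=\big(\kappa_0\pm\sqrt{\kappa_0^2-4d_g\frac{d_b}{d_b+d}P^2}\,\big)/(2d_g)$. A short substitution of the value of $P$ shows that $4d_g\frac{d_b}{d_b+d}P^2=\Theta$, so the discriminant is exactly $\kappa_0^2-\Theta$, which reproduces the stated formula for $w_\pm$.

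Finally, I would verify positivity and the count of solutions. The hypothesis $\kappa_0^2>\Theta$ makes the discriminant strictly positive, so the quadratic has two distinct real roots. By the Vieta formulas their sum equals $\kappa_0/d_g>0$ and their product equals $\frac{d_b}{(d_b+d)d_g}P^2>0$; since both quantities are positive, both roots $w_\pm$ are positive. Then $u_\pm=\frac{d_c}{a-d_c}(d_b+d)/w_\pm>0$ follows immediately from $a>d_c$ and $w_\pm>0$, so both pairs $(u_\pm,w_\pm)$ are genuine positive steady states.

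I do not expect a substantive obstacle here: the argument is elementary algebra, the essential observation being that the first equation pins down the product $uw$ and thereby collapses the system to a single quadratic in $w$. The only steps requiring a little care are the verification that the discriminant simplifies \emph{exactly} to $\kappa_0^2-\Theta$ and the sign bookkeeping guaranteeing that both roots are positive; I would present these as the one nontrivial computation.
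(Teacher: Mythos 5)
Your proof is correct, and the computation is exactly the expected one: the paper itself does not prove this lemma but defers to the earlier work \cite{MKS12}, so your argument serves as a self-contained verification. The two key points — that $u_t=0$ with $u>0$ pins down the product $uw=d_c(d_b+d)/(a-d_c)$, collapsing the system to the quadratic $d_g w^2-\kappa_0 w+\frac{d_b}{d_b+d}(uw)^2=0$ whose discriminant is exactly $\kappa_0^2-\Theta$, and that positivity of both roots follows from the Vieta relations (or from $\kappa_0-\sqrt{\kappa_0^2-\Theta}>0$ since $\Theta>0$) — are both handled correctly.
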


\begin{theorem}
Let  $(u_- , w_-)$ and $(u_+ , w_+)$ be positive  steady states of system \eqref{eq1kA}--\eqref{eq2kA} given by \eqref{u_stst}. Then  $(u_+
, w_+)$ is always unstable. %
While $(u_- , w_-)$ is stable, except for the case
\begin{align*}
\dfrac{d_c}{a}(a-d_c)-d_g > 0, \quad \dfrac{\beta}{2} \le 1 \quad
 \text{and} \quad \kappa_0^2 \le \dfrac{\beta^2 \Theta}{4(\beta-1)},
\end{align*}
where 
\[
  \beta = \dfrac{\dfrac{d_c}{a}(a-d_c)}{\dfrac{d_c}{a}(a-d_c) -d_g} > 1. 
\]
\end{theorem}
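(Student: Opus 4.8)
The plan is to treat this as a standard planar stability analysis: compute the Jacobian of the right--hand sides of \eqref{eq1kA}--\eqref{eq2kA} at each equilibrium and read off stability from the trace and determinant. The decisive simplification comes from the two steady--state identities. The first equation forces $u_\pm w_\pm = d_c(d_b+d)/(a-d_c)$, so the product $uw$ takes the \emph{same} value at both equilibria; the second gives $w_\pm\big(d_g+\tfrac{d_b}{d_b+d}u_\pm^2\big)=\kappa_0$. Substituting these to clear the algebraic clutter, I expect the Jacobian to collapse to
$$
f_u=\frac{d_c}{a}(a-d_c)=:A>0,\qquad g_w=-\frac{\kappa_0}{w_\pm},
$$
and, after working the off--diagonal product $f_wg_u$ through the same identities, to obtain for the trace $T_\pm$ and determinant $D_\pm$
$$
T_\pm=A-\frac{\kappa_0}{w_\pm},\qquad D_\pm=A\Big(\frac{\kappa_0}{w_\pm}-2d_g\Big).
$$
Verifying these two formulas by direct differentiation is the routine computational core of the argument.

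With them in hand, the instability of $(u_+,w_+)$ is immediate. Since $w_\pm=(\kappa_0\pm\sqrt{\kappa_0^2-\Theta})/(2d_g)$ with $\kappa_0^2>\Theta$, the larger root satisfies $w_+>\kappa_0/(2d_g)$, so $\kappa_0/w_+<2d_g$ and $D_+<0$. A negative determinant gives eigenvalues of opposite sign, hence $(u_+,w_+)$ is a saddle and unstable, with no further conditions. For $(u_-,w_-)$ the smaller root obeys $w_-<\kappa_0/(2d_g)$, so $\kappa_0/w_->2d_g$ and $D_->0$ \emph{always}; both eigenvalues then share the sign of the trace, and $(u_-,w_-)$ is asymptotically stable precisely when $T_-<0$, that is when $w_-<\kappa_0/A=:w^\ast$, and fails to be stable exactly when $w_-\ge w^\ast$.

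The remaining step is to convert $w_-\ge w^\ast$ into the three explicit parameter conditions. I would use the quadratic $Q(w)=d_gw^2-\kappa_0 w+\Theta/(4d_g)$ whose roots are exactly $w_\pm$: as $Q$ opens upward and is negative between its roots, $w^\ast\le w_-$ holds iff $Q(w^\ast)\ge0$ \emph{and} $w^\ast\le\kappa_0/(2d_g)$ (so that $w^\ast$ lies left of the vertex). A short computation yields
$$
Q(w^\ast)=\frac{\kappa_0^2(d_g-A)}{A^2}+\frac{\Theta}{4d_g},
$$
so $Q(w^\ast)\ge0$ requires $A>d_g$ and rearranges to $\kappa_0^2\le\Theta A^2/\big(4d_g(A-d_g)\big)$, while $w^\ast\le\kappa_0/(2d_g)$ is equivalent to $A\ge2d_g$. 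Finally, with $\beta=A/(A-d_g)$ one checks the identities $A>d_g\Leftrightarrow\beta>1$, $A\ge2d_g\Leftrightarrow\beta\le2$, and $\beta^2/(\beta-1)=A^2/\big(d_g(A-d_g)\big)$, which turn the criterion into exactly
$$
\frac{d_c}{a}(a-d_c)-d_g>0,\qquad\frac{\beta}{2}\le1,\qquad\kappa_0^2\le\frac{\beta^2\Theta}{4(\beta-1)},
$$
the excluded case of the theorem (the borderline $T_-=0$ accounting for the non--strict inequalities).

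The main obstacle I anticipate is not any single computation but the bookkeeping of this last translation: one must keep \emph{both} conditions $Q(w^\ast)\ge0$ and the vertex condition $w^\ast\le\kappa_0/(2d_g)$, since dropping the latter would wrongly classify the regime $d_g<A<2d_g$ (there $w^\ast$ lies right of the vertex, forcing $w_-<w^\ast$ and hence stability), and then match the three resulting inequalities to the $\beta$--form via $\beta^2/(\beta-1)=A^2/\big(d_g(A-d_g)\big)$. Confirming the collapse of the Jacobian to the clean $T_\pm,D_\pm$ expressions is mechanical but must be executed carefully, as it underpins the entire sign analysis.
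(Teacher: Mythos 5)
Your proposal is correct, and the key collapsed formulas check out: from the steady-state identities $u_\pm w_\pm = d_c(d_b+d)/(a-d_c)$ and $d_g+\tfrac{d_b}{d_b+d}u_\pm^2=\kappa_0/w_\pm$ one indeed gets $f_u=A=\tfrac{d_c}{a}(a-d_c)$, $T_\pm=A-\kappa_0/w_\pm$, and $D_\pm=A\bigl(\tfrac{d_b}{d_b+d}u_\pm^2-d_g\bigr)=A\bigl(\kappa_0/w_\pm-2d_g\bigr)$; your root-location criterion ($w^\ast\le w_-$ iff $Q(w^\ast)\ge0$ and $w^\ast\le \kappa_0/(2d_g)$) together with the identities $\beta\le 2\Leftrightarrow A\ge 2d_g$ and $\beta^2/(\beta-1)=A^2/\bigl(d_g(A-d_g)\bigr)$ reproduces the theorem's excluded case exactly. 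The paper follows the same top-level scheme --- Jacobian at $(\bar u,\bar w)$ plus the trace--determinant criterion of Remark \ref{rem:kinetic} --- but executes the algebra quite differently: it keeps the raw Jacobian entries (its stability conditions \eqref{eq12}--\eqref{eq13} still contain $\bar u^2$), establishes $D_+<0$ and $D_->0$ by inserting the explicit radical expression $w_-^2=(2\kappa_0^2-2\kappa_0\sqrt{\kappa_0^2-\Theta}-\Theta)/(4d_g^2)$, and converts the trace condition by rearranging to inequality \eqref{eq21}, squaring, and splitting into the cases $\beta/2\le 1$ versus $\beta/2>1$ with $\kappa_0^2\lessgtr(\beta/2)\Theta$. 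Your version buys two simplifications: the closed forms for $T_\pm$ and $D_\pm$ make the saddle nature of $(u_+,w_+)$ and the positivity of $D_-$ one-line consequences of $w_\pm\gtrless\kappa_0/(2d_g)$, and the parabola-position argument with $Q(w^\ast)$ eliminates all radical manipulation and the squaring/case analysis --- your vertex condition (i.e.\ $\beta\le 2$) plays precisely the role of the paper's case where $\beta>2$ forces stability regardless of $\kappa_0$ (and note that at $\beta=2$ the excluded case is vacuous, since $\kappa_0^2\le\Theta$ contradicts the standing assumption $\kappa_0^2>\Theta$, consistently in both treatments). Both routes land on the identical characterization, including the non-strict boundary inequalities arising from $T_-=0$.
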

\begin{proof}
Let $(\bar{u}, \bar{w})$ denote a steady state of
 \eqref{eq1kA}--\eqref{eq2kA}. %
From  direct calculations, the Jacobian matrix $J$ at $(\bar{u},
 \bar{w})$ of the nonlinear mapping defined by the right-hand side of
\eqref{eq1kA}--\eqref{eq2kA} is of the form
\[
J = 
\left(
\begin{array}{cc}
\dfrac{d_c}{a}(a-d_c) & \dfrac{(a-d_c)^2}{a(d_b + d)}\bar{u}^2 \vspace{0.2cm}\\
-2 \dfrac{d_b d_c}{a-d_c} & -d_g - \dfrac{d_b}{d_b + d} \bar{u}^2
\end{array}
\right).
\]
We know ({\it cf.} Remark \ref{rem:kin}) that 
\begin{enumerate}[(i)]
\item if 
\begin{align}
 \dfrac{d_c}{a}(a-d_c) -d_g -
 \dfrac{d_b}{d_b + d} \bar{u}^2 < 0 \quad \text{and}\quad  - \dfrac{d_g
 d_c}{a}(a-d_c) + \dfrac{d_b d_c (a-d_c)}{a(d_b + d)}\bar{u}^2 > 0, \label{eq12}
\end{align}
then all eigenvalues of $J$ have negative real parts;
\item if 
\begin{align}
 \dfrac{d_c}{a}(a-d_c) -d_g -
 \dfrac{d_b}{d_b + d} \bar{u}^2 > 0 \quad \text{or}\quad - \dfrac{d_g
 d_c}{a}(a-d_c) + \dfrac{d_b d_c (a-d_c)}{a(d_b + d)}\bar{u}^2 < 0, \label{eq13}
\end{align}
then there is an eigenvalue of $J$ which has a positive real part.
\end{enumerate}
{\it Step 1.} First, we show the stability of $(u_+, w_+)$. Using  estimates \eqref{u_stst}, the second inequality of \eqref{eq13} can be written in the form
\begin{align}
\dfrac{d_b (d_b + d)}{d_g}\left(\dfrac{a-d_c}{d_c}\right)^2 <
 \bar{w}^2. \label{eq14}
\end{align}
Note that the left-hand side of \eqref{eq14} satisfies
\[
 \dfrac{d_b (d_b + d)}{d_g}\left(\dfrac{a-d_c}{d_c}\right)^2 =
 \frac{\Theta}{4 d_g^2}. 
\]
and $w_+$ satisfies $ \left(\kappa_0 / (2d_g)\right)^2 < w_+^2 <
 \left(\kappa_0 / d_g \right)^2$. %
Therefore, it follows from the assumption $\kappa_0^2 > \Theta$ that 
\[
 \frac{\Theta}{4 d_g^2} < \frac{\kappa_0^2}{4 d_g^2} < w_+^2,
\]
which implies that the steady state $(u_+, w_+)$ is unstable. %

{\it Step 2.} Next, we show stability of $(u_- , w_-)$. %
 The second inequality of \eqref{eq12} is equivalent to $\Theta /
 (4d_g^2) > \bar{w}^2$. %
The latter inequality holds true, 
since $ w_-^2 = \frac{2\kappa_0^2 - 2\kappa_0
 \sqrt{\kappa_0^2 - \Theta} - \Theta}{4d_g^2}$ and
\begin{align*}
\Theta - \left(2\kappa_0^2 - 2\kappa_0
 \sqrt{\kappa_0^2 - \Theta} - \Theta \right) = 2\sqrt{\kappa_0^2 - \Theta}
 \left(\kappa_0 - \sqrt{\kappa_0^2 - \Theta} \right) > 0.
\end{align*}
%
Using \eqref{u_stst} and the relationship $ d_b (d_b +
 d)\left(\frac{d_c}{a-d_c}\right)^2 = \frac{\Theta}{4d_g}$, the first condition of \eqref{eq12}
 becomes
\[
 w_-^2 \left[\frac{d_c}{a} (a-d_c) - d_g \right] < \frac{\Theta}{4d_g}.
\]
If $\frac{d_c}{a} (a-d_c) - d_g \le 0$, then the above inequality always
 holds. %

Assume $\frac{d_c}{a} (a-d_c) - d_g > 0$. %
Note that $w_-^2$ is given by
\[
 w_-^2 = \dfrac{2\kappa_0^2 - 2\kappa_0 \sqrt{\kappa_0^2 - \Theta} -
 \Theta}{4d_g^2}.
\]
Therefore, it is sufficient to show the following inequality
\begin{align}
\left[2\kappa_0^2 - 2\kappa_0 \sqrt{\kappa_0^2 - \Theta} -
 \Theta \right]\left[\frac{d_c}{a} (a-d_c) - d_g \right] < d_g \Theta. \label{eq17}
\end{align}
The left-hand side of \eqref{eq17} is
\begin{align*}
&\left[2\kappa_0^2 - 2\kappa_0 \sqrt{\kappa_0^2 - \Theta} -
 \Theta \right]\left[\frac{d_c}{a} (a-d_c) - d_g \right] \\
&\quad = \dfrac{d_c}{a}(a-d_c)\left[2\kappa_0^2 - 2\kappa_0 \sqrt{\kappa_0^2 - \Theta} -
 \Theta \right] - d_g \left[2\kappa_0^2 - 2\kappa_0 \sqrt{\kappa_0^2 -
 \Theta} \right] + d_g \Theta.
\end{align*}
Hence, if 
\begin{align}
 \dfrac{d_c}{a}(a-d_c)\left[2\kappa_0^2 - 2\kappa_0 \sqrt{\kappa_0^2 - \Theta} -
 \Theta \right] - d_g \left[2\kappa_0^2 - 2\kappa_0 \sqrt{\kappa_0^2 -
 \Theta} \right] < 0, \label{eq18}
\end{align}
then inequality \eqref{eq17} holds true. %
Noting $\frac{d_c}{a}(a-d_c) - d_g > 0$, we obtain, from \eqref{eq18}, that
\begin{align}
2\left[\kappa_0^2 - \kappa_0
 \sqrt{\kappa_0^2 - \Theta} \right] <
 \dfrac{\dfrac{d_c}{a}(a-d_c)}{\dfrac{d_c}{a}(a-d_c) - d_g } \Theta =
 \beta \Theta, \label{eq20}
\end{align}
what is equivalent to 
\begin{align}
\kappa_0^2 -\dfrac{\beta}{2} \Theta <  \kappa_0 \sqrt{\kappa_0^2 - \Theta}. \label{eq21}
\end{align}
If $\beta /2 > 1$ and $\Theta < \kappa_0^2 \le (\beta /2)\Theta$, then
inequality
 \eqref{eq21} is always satisfied since the right-hand side of
 \eqref{eq21} is positive. %
The remaining cases are (i)\ $\beta /2 \le 1$ and (ii)\ $\beta / 2 > 1$ and $\kappa_0^2 > (\beta
 / 2) \Theta$. In the cases (i) and (ii), the both-sides of \eqref{eq21} are positive. %
Therefore, we calculate the square of both sides of \eqref{eq21} and obtain
\begin{align}
 \frac{\beta^2}{4 (\beta - 1)} \Theta < \kappa_0^2. \label{eq2-1204}
\end{align}
If $\beta > 2$, then $\beta / 2 > \beta^2 / (4(\beta - 1))$, while $\beta / 2 \le \beta^2 / (4(\beta - 1))$ if $\beta \le 2$. %
Therefore, the inequality \eqref{eq21} holds in case (ii). %
In case (i), \eqref{eq21} is satisfied under the condition
 \eqref{eq2-1204}. %
\end{proof}


\section*{Acknowledgments}
A.~Marciniak-Czochra was supported by European Research Council Starting Grant No 210680 ``Multiscale mathematical modelling of dynamics of structure formation in cell systems'' and Emmy Noether Programme of German Research Council (DFG). 
The work of G.~Karch was partially supported 
by the NCN grant  2013/09/B/ST1/04412.
K.~Suzuki  acknowledges JSPS the Grant-in-Aid for Scientific Research (C) 26400156.


\end{document}